\newtheorem{theorem}{Theorem}[section]
\newtheorem{proposition}{Proposition} [section]
\newtheorem{definition}{Definition}
\newtheorem{lemma}{Lemma}[section]
\numberwithin{equation}{section}
\newtheorem{remark}{Remark}
\newtheorem{corollary}{Corollary}[section]
\newtheorem{example}{Example}
\title[Global Approximate Controllability via Linear Test Approach]{Linear Test Approach to Global Controllability of Higher-Order Nonlinear Dispersive Equations with Finite-Dimensional Control}
\author[D. Mondal]{Debanjit Mondal}
\thanks{Department of Mathematics \& Statistics, Indian Institute of Science Education and Research Kolkata, Mohanpur, Nadia, \newline\indent West Bengal- 741246, India. \newline\indent Email address:  wrmarit@gmail.com}
\begin{document}
\begin{abstract}
We investigate a class of higher-order nonlinear dispersive equations posed on the circle, subject to additive forcing by a finite-dimensional control. Our main objective is to establish approximate controllability by using the controllability of the inviscid Burgers system, linearized around a suitably constructed trajectory. In contrast to earlier approaches based on Lie algebraic techniques \cite{Chen23,Ahamed_Mondal_25}, our method offers a more concise proof and sheds new light on the structure of the control. Although the approach necessitates a higher-dimensional control space, both the structure and dimension of the control remain uniform with respect to the order of the dispersive equation and the control time.
\end{abstract}

\keywords{KdV-type equation, approximate control problems, linearized inviscous Burger equation, return method, linear test, saturation property.}
\subjclass{35Q31, 35Q35, 35Q53, 93B05, 93B18 }
\allowdisplaybreaks
\maketitle
\section{Introduction}
\subsection*{Model Description}
In this article, we study the dynamics of nonlinear dispersive waves modeled by a family of generalized higher-order nonlinear Korteweg–de Vries (HNKdV) equations posed on the one-dimensional torus \( \mathbb{T} := \mathbb{R} / 2\pi \mathbb{Z} \). Specifically, we consider the following evolution equation:
\begin{align}\label{generalized_sysm}
	\partial_t u + (-1)^{j + 1} \partial_x^{2j + 1} u + \frac{1}{2} \partial_x(u^2) = 0, \quad (t, x) \in (0, \infty) \times \mathbb{T},
\end{align}
where \( j \in \mathbb{N}^* \) and \( u = u(t, x) \) is a real-valued function. This hierarchy of equations generalizes several classical dispersive models: for \( j = 1 \), it reduces to the celebrated Korteweg–de Vries (KdV) equation; for \( j = 2 \), it yields the Kawahara equation; and for \( j = 3 \), it corresponds to a seventh-order KdV-type equation, appears in \cite{Goktas_Hereman_1997,Kenig_Gustavo_Luis_1991}.

The motivation for studying such models arises from the complexity of the full water wave problem, which involves a free boundary formulation of the incompressible, irrotational Euler equations. Due to the inherent difficulty in analyzing these equations directly, a variety of asymptotic models have been derived under suitable assumptions on wave amplitude, wavelength, and surface tension. These simplified models provide valuable qualitative and quantitative insight into the behavior of water waves. For rigorous justifications of several such asymptotic models, see \cite{Bona_Chen_Saut_2002,Bona_Colin_David_2005,Samaniego_2008} and the references therein.

A standard approach involves nondimensionalizing the governing equations, introducing parameters such as the shallowness parameter \( \delta = h / \lambda \), where \( h \) is the undisturbed depth and \( \lambda \) is the wavelength, and the nonlinearity parameter \( \varepsilon = a / h \), where \( a \) is the amplitude of the wave. An additional dimensionless quantity, the Bond number \( \mu \), measures the relative strength of surface tension compared to gravitational forces. The regime \( \delta \ll 1 \) corresponds to long (shallow water) waves, and various reduced models arise depending on the relative sizes of \( \varepsilon \), \( \delta \), and \( \mu \).

Among these models, the KdV equation emerges under the regime \( \varepsilon = \delta^2 \ll 1 \) with \( \mu \neq 1/3 \). Originally derived by Boussinesq \cite{Boussinesq_1871} and later rediscovered by Korteweg and de Vries \cite{KdV_1895}, it takes the form
\[
\pm 2 u_t + 3 u u_x + \left( \frac{1}{3} - \mu \right) u_{xxx} = 0.
\]
In contrast, when \( \varepsilon = \delta^4 \ll 1 \) and the Bond number approaches the critical value \( \mu = 1/3 \), a higher-order dispersive correction becomes necessary. In this regime, Hasimoto \cite{Hasimoto_1970} and later Hunter and Scheurle \cite{Hunter_1988} derived the fifth-order KdV-type equation, known as the Kawahara equation:
\[
\pm 2 u_t + 3 u u_x - \nu u_{xxx} + \frac{1}{45} u_{xxxxx} = 0,
\]
where \( \mu = 1/3 + \nu \varepsilon^{1/2} \) and \( \nu \) is a constant related to surface tension effects.

The seventh-order KdV equation, corresponding to \( j = 3 \) in \eqref{generalized_sysm}, involves a leading seventh-order dispersive term and extends the classical KdV hierarchy. Unlike the Kawahara equation, it is not derived directly from physical models but can arise as a higher-order asymptotic approximation in settings requiring refined dispersive accuracy. Of particular interest is a special case known as the Sawada–Kotera–Ito (SK-Ito) equation, introduced independently in \cite{Sawada_Kotera_1974} and \cite{Ito_1980}, which has been studied extensively in the context of integrable systems; see also \cite{Haq_Shams_Ayesha_2023,Wazwaz_2011} and references therein.

Equations of the form \eqref{generalized_sysm} possess several key conservation laws, including the conservation of mass, energy, and a Hamiltonian structure. These invariants play a central role in the qualitative and quantitative analysis of nonlinear dispersive equations, particularly in the study of well-posedness, stability, and long-time behavior of solutions.

In view of mass conservation, it is natural to restrict the analysis of \eqref{generalized_sysm} to the subspace of mean-zero functions. Accordingly, we consider initial data in the mean-zero Sobolev space
\[
H^s_0(\mathbb{T}) := \left\{ u \in H^s(\mathbb{T}) : [u] = \int_{\mathbb{T}} u(x) \, dx = 0 \right\},
\]
where \( H^s(\mathbb{T}) \) denotes the standard Sobolev space of order \( s \) on the torus \( \mathbb{T} \). We denote the \( L^2(\mathbb{T}) \)-norm by \( \| \cdot \| \), and the \( H^s \)-norm by \( \| \cdot \|_s \).

We now consider the Cauchy problem associated with \eqref{generalized_sysm}:
\begin{align}\label{cauchy_prblm}
	\begin{cases}
		\partial_t u + (-1)^{j + 1} \partial_x^{2j + 1} u + \frac{1}{2} \partial_x(u^2) = 0, & (t, x) \in (0, \infty) \times \mathbb{T}, \\
		u(0, x) = u_0(x), & x \in \mathbb{T},
	\end{cases}
\end{align}
where \( u_0 \in H^s_0(\mathbb{T}) \). Local well-posedness for this class of equations with more general polynomial nonlinearities has been extensively studied; see, e.g., \cite{Kenig_Ponce_gustavo_1994,Kenig_Gustavo_Luis_1991}. A key analytical framework in this context is the Bourgain space \( X_{s,b} \), introduced in \cite{Bourgain_1993,Bourgain_1997}, which employs the Fourier restriction norm method and has since become fundamental in the study of dispersive PDEs. This approach was further developed in works by Kenig, Ponce, and Vega \cite{Kenig_Carlos_Ponce_1996} and Tao \cite{Tao_2001}, among others. For \( j = 2 \), the sharpest known result to date, due to Kato \cite{Kato_2012}, establishes local well-posedness in \( H^s_0(\mathbb{T}) \) for any \( s \ge -\frac{3}{2} \), and global well-posedness for \( s \ge -1 \).

\subsection*{Problem under consideration} 

In this paper, we study the controllability of the higher-order KdV-type equation:  

\begin{align}\label{ctrl_prblm}  
\begin{dcases}  
\partial_t u + (-1)^{j + 1} \partial_x^{2j + 1} u + \frac{1}{2} \partial_x(u^2) = \eta(t, x), \quad (t, x) \in (0, T) \times \mathbb{T}, \\  
u(0, x) = u_0 (x), \quad x \in \mathbb{T},  
\end{dcases}  
\end{align}  

where \( T > 0 \) is fixed, and the external force \( \eta \) represents the control input. Since the late 1980s, the control theory of nonlinear dispersive equations has undergone significant progress, propelled by the development of refined analytical methods and an improved understanding of their complex behavior. The Korteweg–de Vries (KdV) equation has played a central role in this field, serving as a prototype model for controllability and stabilization studies. Foundational results were established by Russell and Zhang \cite{Russell_Zang_1996, Russell_Zhang_1993}, Coron \cite{Coron_2004}, and Rosier \cite{Rosier_10}, among others. For a comprehensive overview of the controllability theory for the KdV equation, we refer to the survey by Cerpa  \cite{Cerpa_2014}.

Progress in the control theory of higher-order dispersive models, such as the Kawahara equation, has followed in recent years. Owing to the presence of a fifth-order dispersive term, the Kawahara equation introduces new technical challenges, particularly in the design of boundary feedback laws and the establishment of observability inequalities. Notable contributions in this direction include works on exact boundary controllability, exponential stabilization, and internal control strategies; see, e.g., \cite{Zhang_Zhao_2012,Zhang_Zhao_2015,Roberto_Kwak_Leal_2022,Zhao_Meng_2018,Chen_2019,Araruna_Capistrano_Doronin_2012}.

Further investigations into critical control phenomena have revealed the existence of so-called "critical sets" that affect the effectiveness of damping mechanisms. This behavior, initially observed in the context of the KdV equation \cite{Rosier_1997}, has been extended to other models such as the Kawahara equation \cite{Araruna_Capistrano_Doronin_2012}, the Boussinesq KdV–KdV system \cite{Capistrano_Roberto_Pazoto_2019}, and other long-wave approximations \cite{Caicedo_Miguel_Capistrano_2017}. The use of Carleman estimates has played a key role in establishing internal null controllability, as shown in \cite{Chen_2019} for the Kawahara equation posed on bounded domains, extending the strategy developed earlier for the KdV equation \cite{Capistrano_Roberto_2015}.

In contrast to classical boundary and distributed control strategies, the use of finite-dimensional control has recently gained attention, particularly through Lie-algebraic methods. For the KdV equation (\(j = 1\)), Chen \cite{Chen23} established approximate controllability using controls supported on a low-dimensional subspace. This approach was further extended to the Kawahara equation (\(j = 2\))  \cite{Ahamed_Mondal_25}, where the control acts within a fixed subspace of Fourier modes. These works provide optimal results in terms of the minimal number of directly actuated modes and demonstrate approximate controllability in fixed time \( T > 0 \) using control inputs from  
\[
\mathcal{H} := \text{span} \{ \sin(x), \cos(x) \}.
\]

In this article, we do not focus on using the smallest number of control modes. Instead, we present a simpler and more flexible method to show approximate controllability for equations with higher-order dispersion. This approach gives new insight into the control structure, additionally, extends to establishing approximate controllability for higher-order nonlinear dispersive equations with finite-dimensional control. For clarity, we include a simplified version of our main result in this introduction. We assume that the control subspace \( \mathcal{H} \) is given by:  
\begin{align}\label{defn_subspace_H}  
\mathcal{H} = \text{span} \{ \sin (x), \cos (x), \sin (2x), \cos (2x) \}.  
\end{align}  

\textbf{Main Theorem}.
For any \( (j, s) \in \mathbb{N}^* \times \mathbb{N} \), the equation \eqref{ctrl_prblm} is approximately controllable in small time using a control that takes values in \( \mathcal{H} \). That is, for any initial state \( u_0 \in H^{s+ 2j + 1}_0(\mathbb{T}) \) and target state \( u_1 \in H^{s+ 2j + 1}_0(\mathbb{T}) \), there exists a sufficiently small \( \tau > 0 \) and a control function \( \eta_{\tau} \in L^2((0, T\tau); \mathcal{H}) \) such that a solution \( u \) of \eqref{ctrl_prblm} satisfies  
\begin{align}\label{lim_u_Ttau}  
	u(T \tau) \to u_1 \quad \text{in } H^s(\mathbb{T}) \text{ as } \tau \to 0^+.  
\end{align}  
Furthermore, the control \( \eta_{\tau} \) can be expressed as  
\begin{align}\label{form_of_ctrl}  
	\eta_{\tau} = \mathcal{C}_{\tau}(u_0, u_1) + \xi_{\tau},  
\end{align}  
where  
\[
\mathcal{C}_{\tau}: H^s_0(\mathbb{T}) \times H^s_0(\mathbb{T}) \to L^2((0, T\tau); \mathcal{H})  
\]  
is a bounded linear operator with a finite-dimensional range, and \( \xi_{\tau} \in L^2((0, T\tau); \mathcal{H}) \). Importantly, both \( \mathcal{C}_{\tau} \) and \( \xi_{\tau} \) are independent of the specific choices of \( u_0 \) and \( u_1 \). Limit \eqref{lim_u_Ttau} is uniform with respect to $u_0$ and $u_1$ in a bounded set of $H^{s+ 2j + 1}_0(\mathbb{T}).$

A more general version of this result is presented in the Section \ref{section_general_result}, where we introduce a saturation property that ensures small-time approximate controllability for various subspaces \( \mathcal{H} \) arising from the equation's nonlinearity. As a direct consequence of the Main Theorem, we also establish approximate controllability in a fixed time.
\begin{corollary}  
For any \( (j, s) \in \mathbb{N}^* \times \mathbb{N} \), the equation \eqref{ctrl_prblm} exhibits global approximate controllability in \( H^s_0(\mathbb{T}) \) at any fixed time \( T > 0 \). That is, for any \( \varepsilon > 0 \) and any initial and target states \( u_0, u_1 \in H^s_0(\mathbb{T}) \), there exists a control function \( \eta \in L^2((0, T); \mathcal{H}) \) such that the corresponding solution \( u \) of \eqref{ctrl_prblm}, defined on \( [0, T] \), satisfies  
\[
\|u(T) - u_1 \|_{s} \leq \varepsilon.
\]  
\end{corollary}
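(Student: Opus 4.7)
The plan is to deduce fixed-time approximate controllability from the small-time statement of the Main Theorem by a standard \emph{idle-then-act} concatenation: use zero control on most of $[0,T]$ and concentrate the active control on a short terminal subinterval. The extra time is absorbed by the free flow of \eqref{generalized_sysm}, and the fact that $u_0,u_1$ live only in $H^s_0(\mathbb{T})$ (not in $H^{s+2j+1}_0(\mathbb{T})$, which is what the Main Theorem requires) is handled by a density-plus-continuous-dependence argument.

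First, I would fix $\varepsilon>0$ and, by density of $H^{s+2j+1}_0(\mathbb{T})$ in $H^s_0(\mathbb{T})$, pick smooth approximants $\tilde u_0,\tilde u_1\in H^{s+2j+1}_0(\mathbb{T})$ with $\|u_i-\tilde u_i\|_{s}\le\delta$ for a small parameter $\delta>0$ to be chosen at the end. Let $S(t)$ denote the flow of the uncontrolled equation \eqref{generalized_sysm}; by the well-posedness theory cited in the introduction (Bourgain-type Fourier restriction method, Kenig--Ponce--Vega, Kato), $S$ preserves the regularity class $H^{s+2j+1}_0(\mathbb{T})$, so for any $\tau\in(0,1)$ the state $v_0:=S\bigl(T(1-\tau)\bigr)\tilde u_0$ belongs to $H^{s+2j+1}_0(\mathbb{T})$.

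Next, apply the Main Theorem to the smooth pair $(v_0,\tilde u_1)$: for $\tau>0$ sufficiently small there exists $\eta_\tau\in L^2\bigl((0,T\tau);\mathcal{H}\bigr)$ such that the corresponding solution $w$ of \eqref{ctrl_prblm} with $w(0)=v_0$ satisfies $\|w(T\tau)-\tilde u_1\|_s\le\delta$. I then define a control on the full interval by
\begin{equation*}
\eta(t)=\begin{cases}0, & 0\le t\le T(1-\tau),\\ \eta_\tau\bigl(t-T(1-\tau)\bigr), & T(1-\tau)\le t\le T,\end{cases}
\end{equation*}
which evidently lies in $L^2((0,T);\mathcal{H})$. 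Let $u$ be the corresponding solution of \eqref{ctrl_prblm} with $u(0)=u_0$.

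Finally I would close the estimate in three pieces via the triangle inequality. On $[0,T(1-\tau)]$ the control is off and $u(t)=S(t)u_0$, so continuous dependence of the free flow on initial data gives $\|u(T(1-\tau))-v_0\|_s\le C_1\|u_0-\tilde u_0\|_s\le C_1\delta$. On $[T(1-\tau),T]$ the same control acts on both $u$ and (a time-translated copy of) $w$, so well-posedness of the forced equation yields $\|u(T)-w(T\tau)\|_s\le C_2\|u(T(1-\tau))-v_0\|_s\le C_1C_2\delta$. Combining with $\|w(T\tau)-\tilde u_1\|_s\le\delta$ and $\|\tilde u_1-u_1\|_s\le\delta$ gives $\|u(T)-u_1\|_s\le (1+C_1C_2+1)\delta$, and choosing $\delta$ small enough finishes the proof. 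The only delicate step is the continuous-dependence bound on the forced equation with control $\eta_\tau$, but this is a standard consequence of the local well-posedness framework already invoked to define $S(t)$, since the smoothing works in a space where $\eta_\tau\in L^2((0,T\tau);\mathcal{H})\subset L^2((0,T\tau);H^{s+2j+1}_0)$, so the inhomogeneity is harmless at this level of regularity. No new analytical tool beyond the Main Theorem and the well-posedness of \eqref{cauchy_prblm} is required.
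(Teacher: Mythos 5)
Your route is genuinely different from the paper's. The paper acts \emph{first}: it applies the small-time result on $[0,T\tau_1]$ to reach a ball $B(u_1,r)$, then uses continuity of the flow (Propositions \ref{prp_existence}--\ref{prp_stability}) to show the free trajectory stays $\varepsilon$-close to $u_1$ for a time $t'$, and iterates the steering finitely many times until time $T$ is exhausted. You idle first and act last, absorbing the surplus time into the free flow of \eqref{generalized_sysm} and applying the Main Theorem once on the terminal subinterval $[T(1-\tau),T]$. Your version avoids the iteration and the ``trajectories starting near $u_1$ remain near $u_1$'' lemma, at the price of having to propagate the $H^{s+2j+1}_0(\mathbb{T})$ regularity of the approximant through the free flow over a long time interval (fine here, since Proposition \ref{prp_existence} is global at every integer regularity level and mass conservation preserves the mean-zero class) and of needing stability of the \emph{forced} equation with the singular control on the terminal piece.

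Two points are glossed over and one of them is a genuine gap as written. First, your initial state $v_0=S(T(1-\tau))\tilde u_0$ depends on $\tau$, while the Main Theorem is a limit statement for a \emph{fixed} pair of data; you must invoke the uniformity of the limit \eqref{lim_u_Ttau} over bounded subsets of $H^{s+2j+1}_0(\mathbb{T})$, applied to the bounded (indeed compact) set $\{S(t)\tilde u_0: t\in[0,T]\}$, to choose a single $\tau$ that works. Second, and more seriously, your quantifiers are circular: you fix $\delta$, which determines $\tilde u_0$ and then $\tau=\tau(\delta)$, and your constant $C_2$ comes from Proposition \ref{prp_stability} applied with the control $\eta_\tau$ of \eqref{form_ctrl_as_solun_linear_equn}, whose $L^2((0,T\tau);H^s)$-norm blows up like $\tau^{-3/2}$ as $\tau\to0^+$; since the stability constant is not claimed to be uniform in the forcing, $C_2=C_2(\tau(\delta))$ may grow as $\delta\to0$, so ``choosing $\delta$ small enough at the end'' does not close the estimate. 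The repair is to reorder: fix $\tilde u_1$ first (its error enters with no constant), fix $\tau$ and hence $C_2$ via the uniform Main Theorem, and only then choose $\tilde u_0$ with $\|u_0-\tilde u_0\|_s\le\varepsilon/(3C_1C_2)$ --- using the affine structure \eqref{form_of_ctrl} of the control (boundedness of $\mathcal{C}_\tau$ on $H^s_0\times H^s_0$) to keep the family of controls, and hence $C_2$, uniformly bounded as $\tilde u_0$ varies. To be fair, the paper's own proof of Corollary \ref{corollary_approx_in_Hs} dismisses the passage from $H^{s+2j+1}_0$ to $H^s_0$ data with the words ``by density'' and hides the same issue; but since you make the constants explicit, you should also make their dependence on $\tau$ explicit and order the choices accordingly.
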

The key idea behind this result, broadly speaking, is to use the main Theorem on a small time interval, then construct a control that ensures the trajectory stays arbitrarily close to the desired target throughout the given time interval.

\subsection*{Controllability by finite-dimensional control force:} The controllability of nonlinear partial differential equations (PDEs) under the action of an additive finite-dimensional forcing has attracted significant attention in recent years. This line of research was initiated by Agrachev and Sarychev \cite{Agrachev_2005,Agrachev_2006}, who established approximate controllability results for the two-dimensional Navier–Stokes and Euler equations on the torus. Their methodology has since been successfully adapted to a broad class of nonlinear PDEs. It has been applied, for instance, to the three-dimensional Navier–Stokes equations \cite{Shirikyan_06,Shirikyan_07}, as well as to both compressible and incompressible Euler systems \cite{Nersisyan_2010,Nersisyan_2011}. Further developments include applications to the viscous Burgers equation \cite{Shirikyan_14,Shirikyan_18}, the Schrödinger equation \cite{Sarychev_12}, and more recently to the semiclassical cubic Schrödinger equation, where small-time approximate controllability of quantum density and momentum has been established \cite{Coron_Xiang_Zhang_2023}. The technique has also been extended to nonlinear parabolic systems \cite{Narsesyan_21}, building on the stochastic framework developed in \cite{Glatt-Holtz_Herog_Mattingly_2018}. Global approximate controllability with finite-dimensional controls has been obtained for a variety of models, including the Camassa–Holm equation \cite{Shirshendu_Debanjit_2024}, the Benjamin–Bona–Mahony equation \cite{Jellouli_23}, and the Kuramoto–Sivashinsky equation \cite{Peng_Gao_2022}. These results rely on infinite-dimensional analogues of Lie algebraic techniques and often provide necessary and sufficient conditions on the set of controlled Fourier modes to achieve controllability, in

\subsection*{Return method:} 
The return method, originally introduced by Coron in \cite{Coron_return_stabilization_1992} for stabilization problems, has since become a powerful tool in the study of control theory for nonlinear PDEs. Its first major application to global exact boundary controllability was demonstrated in \cite{Coron_JMPA_1996} for the two-dimensional incompressible Euler system. This method was further extended to various fluid models: in \cite{Coron_Fursikov_1996}, it was employed to establish global exact controllability to trajectories for the 2D Navier–Stokes system without boundary, and in \cite{Coron_ESIAM_1996}, to prove approximate controllability of the 2D Navier–Stokes system with Navier slip boundary conditions, using controls localized in physical space or along the boundary. Subsequent developments include its application to the 3D Euler equations in \cite{Glass_2000}, and to the 3D Boussinesq system in \cite{Fursikov_Imanuvilov_1999}, where global exact controllability to trajectories was addressed. Years later, Chapouly adapted the return method for dispersive PDEs, applying it to both nonviscous and viscous Burgers-type equations \cite{Chapouly_Burger_2009}, as well as to the global controllability of a nonlinear Korteweg–de Vries (KdV) equation \cite{Chapouly_KdV_2009}, where the author used the controllability of the nonviscous Burgers equation to obtain controllability results for the KdV equation. For a comprehensive overview of the return method and its wide-ranging applications in control theory, we refer to Chapter 6 of \cite{Coron_book_2007}.

\subsection*{Our approach }  In this work, we proceed by developing \cite{Chapouly_KdV_2009} using finite dimensional control. To the best of our knowledge, this is the first application of the Agrachev–Sarychev framework, combined with the return method, to this class of nonlinear dispersive equations. Although the control configuration employed here does not lead to sharp criteria regarding the minimal set of directly forced Fourier modes, it yields a new proof that offers further insight into the underlying structure of the control.

The main idea of the proof can be outlined as follows. We begin by considering the inviscid Burgers equation \eqref{burger_equn}. Inspired by the return method, we construct a suitable reference trajectory \( w(t) \), which starts and ends at the origin over a fixed time interval \([0, T]\), and is associated with a control taking values in a finite-dimensional subspace. We then linearize the equation around this trajectory, and denote by \( v(t) \) the solution to the resulting linearized system \eqref{linearized_Burger}, driven by another control that also lies in the same finite-dimensional space. The key step is to modify the trajectory \( w(t) \) so that its time components form an observable family (see Definition \ref{defn_Observable_family}). This observability ensures that the time-dependent linearized system is approximately controllable using controls from the same finite-dimensional subspace. We then construct a control such that, for each \( j \in \mathbb{N}^* \), the corresponding solutions \( u_j(T\tau) \) and \( v(T) \), initialized from the same data, remain close in the \( H^s \)-norm as \( \tau \to 0^+ \). The approximate controllability of the linearized system, together with the notion of an approximate right inverse, allows us to conclude the limit relation \eqref{lim_u_Ttau}, with the control explicitly constructed in the form \eqref{form_of_ctrl}. In fact, the operator \( \mathcal{C}_\tau \) serves as an approximate right inverse to the resolving operator of the linearized Burgers system, and the  \( \xi_\tau \) is given explicitly in terms of the reference trajectory \( w \) and the corresponding control for the inviscid Burgers equation.

\subsection*{Achievement of the present work} The methodology employed here is inspired by the work of Nersesyan \cite{Nersesyan_2021_Sicon}, where approximate controllability of the three-dimensional incompressible Navier–Stokes equations was established via a combination of the return method and the concept of observable families. In the present paper, we extend this framework to a class of nonlinear dispersive equations. The adaptation to this dispersive setting entails several analytical and structural challenges, which are carefully addressed in the course of the analysis.

Overcoming these difficulties allows us to establish several novelties of our  result. In particular, we prove approximate controllability using finite-dimensional controls, where the dimension of the control space is independent of both the order of the dispersive term and the control time. Such finite-dimensional control strategies are not only of theoretical interest but also relevant for practical applications in physics and engineering. Furthermore, the control is constructed in an explicit and structured form \eqref{form_of_ctrl}, which remains uniform with respect to the dispersion order, control time, and the choice of initial and target states.

\subsection*{Structure of the paper}
In Section 2, we introduce the linear test for controllability framework, outlining key assumptions and employing essential tools from functional analysis. Section \ref{section-aaprox_control} is devoted to establishing the approximate controllability of the nonlinear system, derived from the corresponding result for the linearized Burgers-type equation, under suitable conditions on the control space. In Section \ref{section_general_result}, we demonstrate that the structural assumptions introduced in Section 2 are satisfied when a certain saturation property holds for the set of controlled Fourier modes. Section 5 discusses the verification of this saturation condition in detail. Finally, in the Appendix (Section 6), we sketch the proof of a functional analytic proposition stated in Section 2.

\subsection*{Acknowledgments}  
The author gratefully acknowledges his PhD supervisor, Dr. Shirshendu Chowdhury, for insightful discussions and guidance. Special thanks are due to Dr. Rajib Dutta for his careful reading and valuable suggestions that improved the manuscript. The author also sincerely thanks Subrata Majumdar and Sakil Ahamed for their helpful comments and corrections. This work was supported by the Integrated PhD Fellowship at IISER Kolkata, India.

\section{Preliminaries for Controllability via Linear Test }
\subsection{Existence and stability} We reformulate the HNKdV system \eqref{ctrl_prblm} as  
\begin{align}\label{diff_form_ctrl_prblm}  
\begin{dcases}  
\partial_t u + \mathcal{L}_j u + \mathcal{B}(u) = f, \quad (t, x) \in (0, T) \times \mathbb{T}, \\  
u(0, x) = u_0 (x), \quad x \in \mathbb{T},  
\end{dcases}  
\end{align}  
where the linear operator $\mathcal{L}_j$ is given by  
\[
\mathcal{L}_j := (-1)^{j + 1} \partial_x^{2j + 1}, \quad \text{for all } j \in \mathbb{N}^*,  
\]  
and the nonlinear operator $\mathcal{B}$ is defined as  
\[
\mathcal{B}(u) := \frac{1}{2} \partial_x(u^2).  
\]
In this section, we recall an existence result for the problem \eqref{diff_form_ctrl_prblm}.
\begin{proposition}\label{prp_existence}
Let $(j, s) \in \mathbb{N}^* \times \mathbb{N}$, and suppose $T > 0$, \( u_0 \in H^s(\mathbb{T}) \), and \( f \in L^2((0, T); H^s_0(\mathbb{T})) \) are given. Then the system \eqref{diff_form_ctrl_prblm} admits a unique solution in \( C([0, T]; H^s(\mathbb{T})) \).  
\end{proposition}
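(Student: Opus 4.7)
The plan is to establish Proposition~\ref{prp_existence} via the standard Duhamel--semigroup framework for dispersive equations. First I would observe that $\mathcal{L}_j$ has the purely imaginary Fourier symbol $i(-1)^{j+1}\xi^{2j+1}$, so $S_j(t) := e^{-t\mathcal{L}_j}$ defines a strongly continuous unitary group on every $H^s(\mathbb{T})$ which additionally preserves the mean. Integrating the equation in $x$ and using $[f]=0$ together with the periodicity of $u$ gives conservation of $[u]$ along the flow, so the substitution $v = u - [u_0]$ reduces matters to the mean-zero setting modulo a harmless constant-coefficient transport $[u_0]\,\partial_x v$. The problem is then recast as the integral equation
\[
u(t) = S_j(t) u_0 + \int_0^t S_j(t-\sigma)\bigl(f(\sigma) - \mathcal{B}(u(\sigma))\bigr)\, d\sigma,
\]
to be solved by a fixed-point argument.

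For local well-posedness I would treat $s\in\mathbb{N}$ in two regimes. When $s \geq 1$, the Sobolev embedding $H^s(\mathbb{T}) \hookrightarrow L^\infty$ and the algebra property of $H^s$ yield $\|\mathcal{B}(u)\|_s \lesssim \|u\|_s^2$, so a direct Banach contraction in $C([0,T_0];H^s)$ furnishes a unique local solution on an interval whose length depends only on $\|u_0\|_s$ and $\|f\|_{L^2((0,T);H^s)}$. When $s = 0$, the contraction must instead be set up in the Bourgain spaces $X^{s,b}$ tailored to the symbol of $\mathcal{L}_j$; combining the classical linear and inhomogeneous $X^{s,b}$ estimates with the bilinear estimate for $\partial_x(u^2)$ (as developed by Bourgain, Kenig--Ponce--Vega, Kato and extended to the higher-order dispersion considered here) closes the fixed point. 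For global existence on $[0,T]$, I would derive an a priori $H^s$ bound: pairing the equation with $u$ and using $\int u\,\partial_x(u^2)\,dx = 0$ together with the skew-symmetry of $\mathcal{L}_j$ gives $\tfrac{1}{2}\tfrac{d}{dt}\|u\|^2 = (f,u)$, while for $s \geq 1$ one applies $\partial_x^s$, pairs with $\partial_x^s u$, and invokes a Kato--Ponce commutator estimate to obtain the Gronwall-ready inequality $\tfrac{d}{dt}\|u\|_s^2 \lesssim (1+\|u\|_s)\|u\|_s^2 + \|f\|_s\|u\|_s$. Iterating the local solution then yields the claimed element of $C([0,T];H^s)$, and applying the same estimates to the difference of two solutions delivers uniqueness and continuous dependence.

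The principal technical obstacle is the bilinear estimate for $\partial_x(u^2)$ at the endpoint regularity $s = 0$, which requires Fourier restriction machinery specific to each value of $j$ and accounts for most of the depth in the cited dispersive well-posedness literature; for the higher regularities actually used in the applications of this paper, the energy-based approach above is entirely elementary and Proposition~\ref{prp_existence} becomes a direct restatement of standard results.
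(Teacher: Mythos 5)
There is a genuine gap in the local-existence step for $s \ge 1$. You claim that the algebra property of $H^s(\mathbb{T})$ gives $\|\mathcal{B}(u)\|_s \lesssim \|u\|_s^2$ and that a direct Banach contraction in $C([0,T_0];H^s)$ therefore closes. This is false: $\mathcal{B}(u) = \tfrac12\partial_x(u^2) = u\,\partial_x u$ costs one derivative, so the algebra property only yields $\|\mathcal{B}(u)\|_s \lesssim \|u\|_{s}\,\|u\|_{s+1}$ (equivalently $\|\partial_x(u^2)\|_s \le \|u^2\|_{s+1} \lesssim \|u\|_{s+1}^2$). The Duhamel map consequently sends $C([0,T_0];H^s)$ into $C([0,T_0];H^{s-1})$ and the fixed-point argument does not close at the level of $H^s$ no matter how small $T_0$ is. Recovering that derivative is exactly the crux of the problem: one must either exploit dispersive smoothing in Bourgain spaces $X^{s,b}$ (where the bilinear estimate $\|\partial_x(uv)\|_{X^{s,b-1}} \lesssim \|u\|_{X^{s,b}}\|v\|_{X^{s,b}}$ regains the derivative through nonresonant interactions), or abandon the contraction entirely in favour of a quasilinear Bona--Smith/Kato scheme (regularize, prove uniform energy bounds via the commutator cancellation $\int_{\mathbb{T}} u\,\partial_x^{s+1}u\,\partial_x^s u\,dx = -\tfrac12\int_{\mathbb{T}} u_x(\partial_x^s u)^2\,dx$, and pass to the limit), which yields existence and uniqueness but not a Lipschitz solution map. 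Your closing remark that the energy approach makes the proposition ``a direct restatement of standard results'' for $s\ge 1$ therefore understates the work required; note also that your Gronwall inequality needs $\|u_x\|_{L^\infty}\lesssim \|u\|_s$, which fails for $s=1$.

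For comparison, the paper does not prove this proposition by energy methods at all: it defers entirely to the Bourgain-space framework, citing the proofs for $j=1,2$ in the references with $b=\tfrac12$, and for $j\ge 3$ observing that the high-low nonresonant interactions give a derivative gain of at least $\min\{2bj,(1-b)j\}$, which cancels the derivative in the nonlinearity whenever $\tfrac{1}{2j}\le b\le 1-\tfrac{1}{2j}$; the details are then omitted. Your proposal correctly identifies this machinery as necessary for $s=0$, but the same mechanism (or the Bona--Smith alternative) is in fact needed for all $s$ if one wants a self-contained argument.
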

For each \( j \in \mathbb{N}^* \), let \( \mathcal{R}_j \) be the operator that maps the initial condition \( (u_0, f) \) to the corresponding solution \( u \) of \eqref{diff_form_ctrl_prblm}. The solution evaluated at time \( t \) is represented by \( \mathcal{R}_{j, t}(u_0, f) \).

\begin{proposition}\label{prp_stability}  
Let \( s \in \mathbb{N} \), \( T > 0 \), and \( f, g \in L^2((0, T); H^s_0(\mathbb{T})) \). For any initial data \( u_0, v_0 \in H^s(\mathbb{T}) \) satisfying \( [u_0] = [v_0] \), there exists a constant \( C > 0 \) such that  
\begin{align}\label{stability}  
\sup_{t \in [0, T]} \|\mathcal{R}_{j, t}(u_0, f) - \mathcal{R}_{j, t}(v_0, g)\|_s \leq C \Big(\|u_0 - v_0\|_s + \| f - g\|_{L^2((0, T); H^s_0(\mathbb{T}))}\Big), \quad \forall j \in \mathbb{N}^*.  
\end{align}  
\end{proposition}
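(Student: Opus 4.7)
The plan is a standard Grönwall-type argument on the equation satisfied by the difference. Set $u := \mathcal{R}_j(u_0,f)$ and $v := \mathcal{R}_j(v_0,g)$; by Proposition \ref{prp_existence} both lie in $C([0,T];H^s(\mathbb{T}))$. Integrating \eqref{diff_form_ctrl_prblm} over $\mathbb{T}$ and using that odd-order derivatives and the conservative nonlinearity $\mathcal{B}$ have zero mean, together with $[f]=[g]=0$, shows $[u(t)]=[u_0]$, $[v(t)]=[v_0]$, so the hypothesis $[u_0]=[v_0]$ gives $[w(t)] \equiv 0$ for $w:=u-v$. Subtracting the two equations and using $\tfrac12\partial_x(u^2-v^2) = \tfrac12\partial_x\bigl((u+v)w\bigr)$, one obtains
\begin{equation*}
\partial_t w + \mathcal{L}_j w + \tfrac{1}{2}\,\partial_x\!\bigl((u+v)w\bigr) = f-g, \qquad w(0)=u_0-v_0.
\end{equation*}

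Next, I would perform an $H^s$ energy estimate: apply $\partial_x^s$ to the above, pair with $\partial_x^s w$ in $L^2(\mathbb{T})$, and integrate by parts. The dispersive contribution vanishes exactly, since on the torus
\begin{equation*}
\bigl\langle \partial_x^s \mathcal{L}_j w,\,\partial_x^s w\bigr\rangle_{L^2} = (-1)^{j+1}\bigl\langle \partial_x^{2j+1+s} w,\,\partial_x^s w\bigr\rangle_{L^2} = 0,
\end{equation*}
because $\partial_x$ is skew-adjoint and $2j+1$ is odd. For the nonlinear term I would decompose
\begin{equation*}
\tfrac{1}{2}\partial_x^{s+1}\!\bigl((u+v)w\bigr) = \tfrac{1}{2}\,\partial_x\!\bigl((u+v)\,\partial_x^s w\bigr) + \tfrac{1}{2}\bigl[\partial_x^{s+1},u+v\bigr]w,
\end{equation*}
so the leading piece, after one integration by parts, contributes $\tfrac{1}{4}\int_\mathbb{T} \partial_x(u+v)\,(\partial_x^s w)^2\,dx$, bounded by $\|\partial_x(u+v)\|_{L^\infty}\|w\|_s^2$, while the commutator is handled by a Kato–Ponce inequality, yielding a bound of the same type up to $\|u+v\|_{H^{s+1}}$-factors.

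Combining these with the Cauchy–Schwarz bound on $\langle f-g,w\rangle_{H^s}$ gives a differential inequality of the form
\begin{equation*}
\tfrac{d}{dt}\|w(t)\|_s^{2} \;\leq\; C\,\Phi(t)\,\|w(t)\|_s^{2} + \|f(t)-g(t)\|_s^{2},
\end{equation*}
where $\Phi(t)$ is a locally integrable function depending on $H^{s+1}$-type norms of $u(t)+v(t)$ (controlled by Proposition \ref{prp_existence} and the regularity of the data). Grönwall's lemma then yields \eqref{stability}. The main obstacle I anticipate is the control of $\Phi$ in the low-regularity regime: for $s$ large enough that $H^s(\mathbb{T}) \hookrightarrow W^{1,\infty}(\mathbb{T})$ this is immediate, but for small $s$ one must exploit dispersive smoothing (Bourgain-type bilinear estimates) underlying the well-posedness statement of Proposition \ref{prp_existence} to obtain the required time-integrable bound on $\|\partial_x(u+v)\|_{L^\infty}$; once this is in place, the Grönwall step is routine.
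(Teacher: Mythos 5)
Your setup is fine: the difference $w=u-v$ is mean-zero, satisfies the equation you write, and the dispersive term indeed drops out of the $H^s$ energy identity since $\partial_x^{2j+1}$ is skew-adjoint. Note first that the paper does not actually carry out a proof of Proposition \ref{prp_stability}: it defers to \cite{Chen23,Ahamed_Mondal_25} (Bourgain spaces $X_{s,b}$ with $b=\tfrac12$) for $j=1,2$ and only sketches how to choose $b$ for $j\ge 3$. So your energy/Gr\"onwall route is genuinely different from the intended one — and that difference matters, because it runs into a derivative-loss obstruction that the $X_{s,b}$ framework is specifically designed to circumvent.

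The concrete gap is in your treatment of the nonlinear term. You bound the commutator ``up to $\|u+v\|_{H^{s+1}}$-factors,'' but Proposition \ref{prp_existence} only gives $u,v\in C([0,T];H^s(\mathbb{T}))$, so that factor is not finite. The top-order contribution
\begin{equation*}
\int_{\mathbb{T}} \partial_x^{s+1}(u+v)\; w\; \partial_x^{s} w \, dx
\end{equation*}
(equivalently the $k=s+1$ term of the Leibniz expansion, or the worst term in Kato--Ponce) places $s+1$ derivatives on the background $u+v$, which has only $s$; integrating by parts merely trades this for $s+1$ derivatives on $w$, which is equally unavailable. This loss occurs for \emph{every} $s$, not only in the low-regularity regime you flag, and it is precisely the classical reason why Lipschitz dependence of the KdV flow on the data at the same Sobolev regularity cannot be obtained by a pure energy argument (energy methods give at best an $H^{s-1}$ difference bound, or continuity via Bona--Smith regularization, but not the Lipschitz estimate \eqref{stability}). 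The fix is not to feed ``dispersive smoothing'' into your Gr\"onwall inequality as an $L^1_t L^\infty_x$ bound on $\partial_x(u+v)$ — that does not repair the missing derivative — but to abandon the pointwise-in-time energy scheme altogether and estimate the difference in $X_{s,b}$, where the bilinear estimate $\|\partial_x(uw)\|_{X_{s,b-1}}\lesssim\|u\|_{X_{s,b}}\|w\|_{X_{s,b}}$ recovers the derivative and the Lipschitz bound \eqref{stability} falls out of the contraction-mapping argument. As written, your proof closes only if one of the two solutions is assumed one derivative smoother, which is not the hypothesis of the proposition.
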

For each \( j \in \mathbb{N}^* \), we examine the unbounded operator \( (\tilde{\mathcal{L}}_j, \mathcal{D}(\tilde{\mathcal{L}}_j; L^2(\mathbb{T}))) \) in \( L^2(\mathbb{T}) \), which is defined as follows:  
\begin{align*}  
	\mathcal{D}(\tilde{\mathcal{L}}_j; L^2(\mathbb{T})) &= H^{2j + 1}(\mathbb{T}), \\  
	\tilde{\mathcal{L}}_j w &= (-1)^{j + 2} \partial_x^{2j + 1} w.  
\end{align*}  
The eigenfunctions of \( \tilde{\mathcal{L}}_j \) are given by the orthonormal Fourier basis in \( L^2(\mathbb{T}) \):  
\[
\phi_k(x) = \frac{1}{\sqrt{2\pi}} e^{ikx}, \quad k \in \mathbb{Z}.  
\]  
The corresponding eigenvalues are  
\[
\lambda_k = i k^{2j + 1}, \quad k \in \mathbb{Z}.  
\]
The adjoint operator \( \tilde{\mathcal{L}}_j^* \), with the same domain as \( \tilde{\mathcal{L}}_j \), satisfies  
\[
\tilde{\mathcal{L}}_j^* = -\tilde{\mathcal{L}}_j.
\]
Thus, \( \tilde{\mathcal{L}}_j \) is skew-adjoint. By Stone’s theorem (\cite[Theorem 3.8.6]{Tucsnak_book}), it generates a strongly continuous unitary group \( \left\{ \mathcal{W}(t) \right\} \) on \( L^2(\mathbb{T}) \).

As discussed to the introduction, according to \cite{Kenig_Carlos_Ponce_1996}, for the periodic KdV equation, it is necessary to fix the exponent \( b = \frac{1}{2} \) in \( X_{b, s} \). Without this choice, it is impossible to achieve a one-derivative gain in high-low nonresonant interactions, which is essential for eliminating the derivative in the nonlinearity. Consequently, it has become standard practice to fix \( b = \frac{1}{2} \) even for other periodic problems. Following this approach, the proofs of Proposition \ref{prp_existence} and Proposition \ref{prp_stability} for \( j = 1,2 \) were carried out in \cite{Chen23,Ahamed_Mondal_25}, respectively.  

But $b = \frac{1}{2}$ is not necessary for higher-order KdV, for \( j \geq 3 \), one can obtain a derivative gain of at least \( \min\{2bj, (1 - b)j\} \) from high-low nonresonant interactions. This is sufficient to cancel the derivative in the nonlinearity whenever \( \frac{1}{2j} \leq b \leq 1 - \frac{1}{2j} \). By selecting an appropriate value of \( b \) based on this criterion and following a similar analysis, one can rigorously establish the aforementioned propositions. However, in this article, we omit these details.

From now on, we fix an arbitrary time \( T > 0 \) and an integer \( j \in \mathbb{N}^* \). We denote by \( \mathcal{R}_{j,t}(u_0, \eta) \) the restriction of the solution at time \( t \leq T \).

\subsection{Formulation of main result} Building on the strategy developed in \cite{Chapouly_KdV_2009}, we aim to establish the approximate controllability of higher-order KdV-type equations of the form 
\begin{align}\label{ctrl_prblm_without_initial_fr_proof}
	\partial_t u + \mathcal{L}_j u + \mathcal{B}(u) = \eta
\end{align}  
by reducing the problem to the controllability of a linearized system derived from the inviscid Burgers equation. As a starting point, we examine the inviscid Burgers equation given by
\begin{align}\label{burger_equn}
	\partial_t w + \mathcal{B}(w) = \xi,
\end{align}  
and analyze its linearization along a solution trajectory:
\begin{align}\label{linearized_Burger}
	\partial_t v + \mathcal{Q}(v, w) = g,
\end{align}  
which defines a time-dependent linear system. Here, the bilinear operator \(\mathcal{Q}(v, w)\) is given by 
\begin{align}\label{defn_Q_B(v,w)}
	\mathcal{Q}(v, w) = \mathcal{B}(v, w) + \mathcal{B}(w, v), \quad \text{with} \quad \mathcal{B}(v, w) = v \partial_x w.
\end{align}  
The control functions \(\eta, \xi, g\) are assumed to take values in a common finite or infinite dimensional subspace \(\mathcal{H}\) $\subset C^{\infty}(\mathbb{T})$. In the analysis that follows, 

\noindent Given any \( s \in \mathbb{N} \), and a subspace $\mathcal{H} \subset H^{s + 2j + 1}(\mathbb{T})$ we will make use of the following two assumptions:

\begin{enumerate}[(I)]
	\item\label{assumptn_a_1}:  There exists a control function \( \xi \in L^2((0, T); \mathcal{H}) \) and a corresponding solution \( w \in H^1((0, T); H^{s + 2j + 1}(\mathbb{T})) \) to equation \eqref{burger_equn}, satisfying the following conditions:
	\begin{align}
		& w(0) = 0 \quad \text{and} \quad w(T) = 0, \label{w_zero_both_end} \\
		& \mathcal{L}_j w(t) \in \mathcal{H} \quad \text{for all } t \in [0, T]. \label{L_j_w_in_H}
	\end{align}
	\item\label{assumptn_a_2}: The linearized equation \eqref{linearized_Burger}, corresponding to the reference solution \( w \) described in assumption \eqref{assumptn_a_1}, is approximately controllable in time \( T > 0 \) using controls taking values in \( \mathcal{H} \). More precisely, for every target state \( v_1 \in H^{s}_0 \) and tolerance \( \varepsilon > 0 \), there exists a control input \( g \in L^2((0, T); \mathcal{H}) \) such that the solution
	\[
	v \in C([0, T]; H^{s}_0)
	\]
	to \eqref{linearized_Burger}, with initial data \( v(0) = 0 \), satisfies the approximation property
	\[
	\|v(T) - v_1\|_{H^s} \leq \varepsilon.
	\]
\end{enumerate}

\begin{proposition}\label{prpn_closeness_solun}
Let \( s \in \mathbb{N} \), and let \( \mathcal{H} \subset H^{s + 2j + 1}(\mathbb{T}) \) be a subspace for which \eqref{assumptn_a_1} holds. Then, for any initial state \( u_0 \in H^{s + 2j + 1}_0(\mathbb{T}) \), any control function \( g \in L^2((0, T); \mathcal{H}) \), and sufficiently small \( \tau > 0 \), there exists a control \( \eta_\tau \in L^2((0, T\tau); \mathcal{H}) \) such that  
\begin{align}\label{lim_closness_solun}
\mathcal{R}_{j, T\tau}(u_0, \eta_\tau) \to v(T) \quad \text{in } H^s(\mathbb{T}) \quad \text{as } \tau \to 0^+,
\end{align}
where \( v \in C([0, T]; H^{s + 2j + 1}_0) \) solves the linearized equation \eqref{linearized_Burger} with initial condition \( v(0) = u_0 \).  

Furthermore, the control \( \eta_\tau \) is explicitly given by  
\begin{align}\label{form_ctrl_as_solun_linear_equn}
\eta_\tau(t) = \tau^{-1} g(\tau^{-1} t) + \tau^{-2} \xi(\tau^{-1} t) + \mathcal{L}_j w(\tau^{-1} t), \quad t \in [0, T\tau],
\end{align}
and the convergence in \eqref{lim_closness_solun} holds uniformly with respect to \( u_0 \) in bounded subsets of \( H^{s + 2j + 1}_0(\mathbb{T}) \).
\end{proposition}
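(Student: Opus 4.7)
My plan is to decode the two-scale structure built into the explicit control \eqref{form_ctrl_as_solun_linear_equn} via a return-method ansatz in the spirit of \cite{Chapouly_KdV_2009}, now adapted to the dispersive setting. Introduce the slow time $s := t/\tau \in [0, T]$ and write
\[
u(t, x) = \tau^{-1} w(s, x) + \tilde u(s, x).
\]
Plugging this into \eqref{ctrl_prblm_without_initial_fr_proof} and using the Burgers identity $w_s + \mathcal{B}(w) = \xi$ ensured by \eqref{assumptn_a_1}, the singular $\tau^{-2}$ contributions to $\partial_t u$ and $\mathcal{B}(u)$ are cancelled exactly by the $\tau^{-2}\xi$ piece of $\eta_\tau$. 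After multiplying by $\tau$, the identity reorganizes as
\[
\tilde u_s + \mathcal{Q}(\tilde u, w) = g + R_\tau, \qquad \tilde u(0) = u_0,
\]
where $R_\tau$ collects the $\tau$-prefactored remainders $-\tau \mathcal{L}_j \tilde u - \tau \mathcal{B}(\tilde u)$ together with the $\mathcal{L}_j w$-contribution arising from $\mathcal{L}_j u$ that is balanced by the $\mathcal{L}_j w(\tau^{-1}t)$ piece of the control; in particular $R_\tau$ is formally $O(\tau)$. Since $w(T) = 0$ by \eqref{w_zero_both_end}, evaluation at $s = T$ yields $u(T\tau) = \tilde u(T)$, and the claim reduces to $\tilde u(T) \to v(T)$ in $H^s$.

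Because $v$ solves the same linear equation with zero residual, $z := \tilde u - v$ satisfies $z_s + \mathcal{Q}(z, w) = R_\tau$ with $z(0) = 0$. A direct energy estimate on this linear transport-type equation with smooth coefficient $w$ produces
\[
\|z(T)\|_s \le C(w)\, \|R_\tau\|_{L^2((0,T); H^s)},
\]
so the whole argument reduces to showing $\|R_\tau\|_{L^2((0,T); H^s)} \to 0$ as $\tau \to 0^+$, uniformly in $u_0$ on bounded subsets of $H^{s+2j+1}_0(\mathbb{T})$.

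The main obstacle, as I see it, is establishing a $\tau$-uniform a priori bound
\[
\|\tilde u\|_{L^\infty((0,T); H^{s+2j+1})} \le K\bigl(\|u_0\|_{s+2j+1}, \|g\|_{L^2 H^s}, w\bigr).
\]
Naively applying Proposition \ref{prp_existence} to the original $u$-equation is fruitless, since $\|\eta_\tau\|_{L^2((0,T\tau); H^{s+2j+1})}$ blows up like $\tau^{-3/2}$ and the nonlinear well-posedness bounds do not absorb this. Instead, I would run energy estimates directly on the slow-time equation
\[
\tilde u_s + \tau\, \mathcal{L}_j \tilde u + \tau\, \mathcal{B}(\tilde u) + \mathcal{Q}(\tilde u, w) = g + (\text{bounded terms}),
\]
exploiting two structural cancellations: $\mathcal{L}_j$ is skew-adjoint on $L^2$, so $\langle \partial_x^k \mathcal{L}_j \tilde u, \partial_x^k \tilde u\rangle = 0$ for every $k \in \mathbb{N}$; and $\langle \mathcal{B}(\tilde u), \tilde u\rangle = 0$ by integration by parts on $\mathbb{T}$. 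Consequently the singular-looking $\tau$-prefactored terms contribute nothing to the $H^k$ energy identity, leaving only the $\mathcal{Q}(\tilde u, w)$ feedback, which is Lipschitz with constant depending on $\|w\|_{H^{s+2j+2}}$. Gronwall then yields a bound independent of $\tau$; the extension to $H^{s+2j+1}$ follows by commuting with $\partial_x^{s+2j+1}$ and applying standard tame commutator estimates, all permissible thanks to $w \in H^1((0,T); H^{s+2j+1})$ from \eqref{assumptn_a_1}.

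Once this uniform bound is secured, every $\tau$-prefactored summand of $R_\tau$ is $O(\tau)$ in $L^2((0,T); H^s)$, so $z(T) \to 0$ in $H^s$ and therefore $u(T\tau) = \tilde u(T) \to v(T)$. Uniformity in $u_0$ on bounded sets of $H^{s+2j+1}_0(\mathbb{T})$ is then automatic, since every constant above depends polynomially on $\|u_0\|_{s+2j+1}$ and on the fixed data $(\xi, g, w)$.
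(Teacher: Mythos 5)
Your two--scale ansatz $u(t)=\tau^{-1}w(t/\tau)+\tilde u(t/\tau)$ is exactly the paper's rescaling (your $\tilde u$ is the paper's $v_\tau+r$ read in slow time, and your $z$ is precisely the remainder $r$ of \eqref{equn_r} up to the time change), and your overall architecture --- first a $\tau$-uniform \emph{boundedness} estimate for $\tilde u$ in the top norm $H^{s+2j+1}$, then a linear perturbation estimate for $z$ in $H^s$ with the forcing $R_\tau=-\tau\bigl(\mathcal{L}_j\tilde u+\mathcal{B}(\tilde u)\bigr)=O(\tau)$ treated as known --- is a legitimate and arguably cleaner reorganization of the paper's argument, which instead proves \emph{smallness} of $r$ directly in $H^\sigma$ for $\sigma=0,j,\dots,s$ via a hierarchy of multipliers adapted to the KdV conservation laws. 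Two consistency points: the exact cancellation of the singular $\mathcal{L}_jw$ contribution requires the last summand of the control to be $\tau^{-1}\mathcal{L}_jw(\tau^{-1}t)=\mathcal{L}_j w_\tau(t)$ as in \eqref{choice_of_control}, not the literal \eqref{form_ctrl_as_solun_linear_equn} you quote (the paper itself is inconsistent here); and the $H^{s+2j+1}$ estimate of $\mathcal{Q}(\tilde u,w)$ costs one more spatial derivative of $w$ than \eqref{assumptn_a_1} provides, which is harmless only because $\mathcal{H}\subset C^\infty(\mathbb{T})$.

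The genuine gap is your cancellation claim in step (a). While $\langle\mathcal{L}_j\phi,\partial_x^{2k}\phi\rangle=0$ for every $k$, the identity $\langle\mathcal{B}(\phi),\phi\rangle=0$ does \emph{not} survive differentiation: already
\begin{equation*}
\langle\partial_x\mathcal{B}(\phi),\partial_x\phi\rangle=\tfrac12\int_{\mathbb{T}}(\partial_x\phi)^3\,dx\neq0,
\end{equation*}
and in general $\langle\partial_x^k\mathcal{B}(\phi),\partial_x^k\phi\rangle$ is of size $\|\phi\|_{\max(k,2)}\|\phi\|_k^2$. Hence the $\tau$-prefactored Burgers term does \emph{not} ``contribute nothing'' to the $H^k$ energy identity; at the level $k=s+2j+1$ it produces a Riccati-type inequality $\frac{d}{ds}\|\tilde u\|_k^2\lesssim\tau\|\tilde u\|_k^3+C(w)\|\tilde u\|_k^2+\|g\|_k^2$, which plain Gr\"onwall does not close --- and your later appeal to ``tame commutator estimates'' is exactly an admission that these non-cancelling terms are present. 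The estimate can be rescued: the cubic term carries the factor $\tau$, so a comparison/bootstrap argument (the analogue of the paper's $\dot\Psi\lesssim\Psi^2$ step with the threshold $C\varepsilon_\tau T\tau<1/2$) yields a bound on $[0,T]$ uniform for $\tau$ small, and since you only ever work at the top level $k=s+2j+1\geq 3$ you avoid the low-regularity obstructions that force the paper to use the conservation-law multipliers $\mathcal{I}$ and $\mathcal{I}_2$. But as written, the central a priori bound rests on a false identity, and this is precisely the step where all the analytic work of the proposition lives; you need to redo the top-order energy estimate keeping the commutators of $\partial_x^{s+2j+1}$ with $\mathcal{B}$ and close it by a nonlinear Gr\"onwall argument in $\tau$.
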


In the Section \ref{section-aaprox_control}, we present a formal proof of the global approximate controllability of \eqref{ctrl_prblm_without_initial_fr_proof} using an $\mathcal{H}$-valued control of the form \eqref{form_of_ctrl}, under the assumptions \eqref{assumptn_a_1} and \eqref{assumptn_a_2} . This proof relies on some nontrivial tools from functional analysis, which we discuss in the following subsection.

\subsection{Auxiliary Results from Functional Analysis}
In this subsection, we introduce two key notions that are essential for our purposes. The first concerns the existence of an \textit{approximate right inverse} for a linear operator between Hilbert spaces. The second is the concept of an \textit{observable family} in a Hilbert space. Both ideas are followed by the work of Kuksin, Nersesyan and Shirikyan \cite{Kuksin_Nersesyan_Shirikyan_2020}. We now proceed to discuss these notions in a precise mathematical framework.
\subsubsection{Approximate right inverse of linear operator with dense image} Let \( F \) and \( H \) be separable Hilbert spaces, and let \( T: F \to H \) be a bounded linear operator. In general, the equation \( T f = h \), may either admit no solution or possess multiple solutions. However, under suitable assumptions, it is possible to construct an \textit{approximate} solution that depends linearly on the given data \( h \). The results below formalizes this idea.
\begin{proposition}\label{prp_approx_right_inverse}
Let \( T: F \to H \) be a continuous linear operator between separable Hilbert spaces, and assume that \( \mathrm{Im}(T) \) is dense in \( H \). Let \( V \) be a Banach space compactly embedded in \( H \). Then, for any \( \varepsilon > 0 \), there exists a continuous linear operator \( T_{\varepsilon}: H \to F \) with finite-dimensional range such that
\begin{align}\label{equn_approx_right_inverse}
\| T (T_{\varepsilon} h) - h \|_{H} \leq \varepsilon \|h\|_{V}, \quad \text{for all } h \in V.
\end{align}
\end{proposition}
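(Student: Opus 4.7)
The plan is to exploit the two hypotheses separately: the compact embedding $V \hookrightarrow H$ will allow us to replace an arbitrary $h \in V$ by a good finite-dimensional projection (uniformly in bounded sets of $V$), and the density of $\mathrm{Im}(T)$ will allow us to approximately invert $T$ on that finite-dimensional subspace. Combining the two gives the required operator $T_\varepsilon$.

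Concretely, first I would fix a Hilbert orthonormal basis $\{e_k\}_{k \geq 1}$ of $H$ and let $P_N$ denote orthogonal projection onto $\mathrm{span}(e_1, \dots, e_N)$. The key preparatory step is the claim that
\[
\|I - P_N\|_{V \to H} \longrightarrow 0 \quad \text{as } N \to \infty.
\]
This is where the compactness of $V \hookrightarrow H$ enters: if the claim failed, there would be $\delta > 0$ and a sequence $h_N$ in the unit ball of $V$ with $\|(I-P_N)h_N\|_H \geq \delta$; extracting an $H$-convergent subsequence $h_{N_k} \to h$ and splitting $(I - P_{N_k}) h_{N_k} = (I - P_{N_k})(h_{N_k} - h) + (I - P_{N_k}) h$ yields a contradiction. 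Given this, I pick $N = N(\varepsilon)$ so that $\|h - P_N h\|_H \leq (\varepsilon/2) \|h\|_V$ for every $h \in V$.

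Next I would use density of $\mathrm{Im}(T)$: for each $k = 1, \dots, N$ choose $f_k \in F$ such that
\[
\|T f_k - e_k\|_H \leq \frac{\varepsilon}{2 C \sqrt{N}},
\]
where $C$ is the embedding constant $\|\cdot\|_H \leq C \|\cdot\|_V$. Then define the finite-rank operator
\[
T_\varepsilon h := \sum_{k=1}^N \langle h, e_k\rangle_H \, f_k, \qquad h \in H,
\]
which is clearly continuous from $H$ into $F$ with range of dimension at most $N$. A direct computation combined with Cauchy--Schwarz gives
\[
\|T(T_\varepsilon h) - P_N h\|_H = \Bigl\|\sum_{k=1}^N \langle h,e_k\rangle (T f_k - e_k)\Bigr\|_H \leq \|h\|_H \, \sqrt{N} \, \max_{k \leq N}\|T f_k - e_k\|_H \leq \tfrac{\varepsilon}{2} \|h\|_V,
\]
and adding this to $\|h - P_N h\|_H \leq (\varepsilon/2)\|h\|_V$ yields \eqref{equn_approx_right_inverse}.

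The only genuinely nontrivial ingredient is the first claim about $\|I - P_N\|_{V \to H} \to 0$, which truly requires the compactness hypothesis: without it the projections $P_N$ converge only strongly to the identity, not uniformly on bounded sets of $H$. Everything else is bookkeeping with Cauchy--Schwarz and a density argument; there is nothing delicate about the construction of the $f_k$. I expect the written proof in the appendix to be essentially this argument, perhaps with $P_N$ replaced by projections adapted to a specific basis of $H$.
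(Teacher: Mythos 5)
Your proposal is correct, but it is a genuinely different argument from the one in the paper's appendix. You use the compact embedding first, to upgrade the strong convergence $P_N\to I$ on $H$ to the operator-norm statement $\|I-P_N\|_{V\to H}\to 0$ (your contradiction/subsequence argument for this is sound, and monotonicity of $N\mapsto\|(I-P_N)h\|_H$ makes the extraction clean), and only then invoke the density of $\mathrm{Im}(T)$ to pick approximate preimages $f_k$ of the finitely many basis vectors $e_1,\dots,e_N$; the resulting $T_\varepsilon h=\sum_{k\le N}\langle h,e_k\rangle_H f_k$ is finite rank by construction and the Cauchy--Schwarz bookkeeping closes the estimate. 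The paper instead sets $G=TT^*$ and works with the Tikhonov-regularized operator $T^*(G+\gamma I)^{-1}$: a lemma from Kuksin--Nersesyan--Shirikyan gives $TT^*(G+\gamma I)^{-1}h\to h$ pointwise as $\gamma\to 0$, the compact embedding enters through a finite-subcover argument making this uniform on $B_V(1)$, and finite rank is obtained afterwards by truncating with projections in $F$. The two proofs use the compactness hypothesis in the same essential way (uniformity of a strong limit on a precompact set), but your construction is more elementary and avoids the regularization lemma entirely, at the price of depending on an arbitrary choice of basis $\{e_k\}$ and of approximate preimages $f_k$; the paper's $T_\varepsilon=P_{M}T^*(G+\gamma I)^{-1}$ is canonical (it is the minimal-norm least-squares approximate solution) and matches the form standard in the Agrachev--Sarychev literature. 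Either argument proves the proposition as stated.
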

A sketch of the proof of Proposition \ref{prp_approx_right_inverse} is presented in the Appendix.

\subsubsection{Observable Families}
We now introduce the concept of an \emph{observable family},a notion that plays an important role in proving argument $(\mathcal{C}_2)$, we illustrate its existence through a concrete example.

\begin{definition}\label{defn_Observable_family}
A finite collection \( \{ \varphi_i \}_{i=1}^n \subset L^2([0, T]; \mathbb{R}) \) is said to be an \textit{observable family} if the following property holds: for every subinterval \( J \subset [0, T] \), and for any function \( b \in C^0(J; \mathbb{R}) \) together with functions \( a_i \in C^1(J; \mathbb{R}) \), the condition
\begin{align}\label{observable_cndn}
b(t) + \sum_{i=1}^n a_i(t) \varphi_i(t) = 0 \quad \text{in } L^2(J; \mathbb{R})
\end{align}
implies that \( b \equiv 0 \) and \( a_i \equiv 0 \) for all \( 1 \le i \le n \) on \( J \).
\end{definition}

To confirm the existence of such families, we present the following illustrative example.

\begin{example}
Let \( p(k) \) denote the \( k \)-th prime number. Define, for each \( k \), the set
\[
\mathbb{D}_k = \left\{ \frac{m}{p(k)^n} \;\middle|\; m \in \mathbb{Z}, \; n \in \mathbb{N}, \; p(k) \nmid m \right\} \cap [0, T],
\]
which is a countable dense subset of \( [0, T] \). Moreover, the families \( \{\mathbb{D}_i\}_{i=1}^n \) are pairwise disjoint.
	
Now, define a function \( \varphi_k : [0, T] \to \mathbb{R} \) by
\[
\varphi_k(t) = \sum_{m = 1}^\infty \frac{1}{p(k)^n} \chi_{\left[\frac{m - 1}{p(k)^n}, \frac{m}{p(k)^n}\right)}(t),
\]
so that for any \( t \in \left[\frac{m - 1}{p(k)^n}, \frac{m}{p(k)^n}\right) \cap [0, T] \), the function takes the value \( \frac{1}{p(k)^n} \). Each \( \varphi_k \) is discontinuous precisely at the points of \( \mathbb{D}_k \) and continuous elsewhere on \( [0, T] \). These are bounded, measurable functions that admit both left and right limits at every point.
	
We claim that the family \( \{ \varphi_i \}_{i=1}^n \) is observable. To see this, fix an index \( 1 \le i \le n \) and let \( s \in \mathbb{D}_i \). By construction, for \( j \ne i \), the function \( \varphi_j \) is continuous at \( s \), whereas \( \varphi_i \) has a jump. Evaluating the jump in the expression from \eqref{observable_cndn} at \( s \), we obtain
\[
a_i(s) \left( \varphi_i(s^+) - \varphi_i(s^-) \right) = 0,
\]
which implies \( a_i(s) = 0 \), since the jump is nonzero. As \( \mathbb{D}_i \) is dense in \( [0, T] \) and \( a_i \) is continuous, it follows that \( a_i \equiv 0 \) on any subinterval \( J \subset [0, T] \). Substituting back into \eqref{observable_cndn} then forces \( b \equiv 0 \) on \( J \) as well.
	
Hence, the family \( \{ \varphi_i \}_{i=1}^n \) is observable.
\end{example}

\section{Approximate Controllability}\label{section-aaprox_control}
In this section, we state and prove the main controllability result, which serves as an intermediate step toward the general result presented in Section \ref{section_general_result} and the one stated in the introduction. The proof relies on assumptions \eqref{assumptn_a_1}, and \eqref{assumptn_a_2}, and makes essential use of Propositions \ref{prp_approx_right_inverse} and \ref{prpn_closeness_solun}. The proof of Proposition \ref{prpn_closeness_solun} will be provided afterward.

\begin{theorem}\label{thm_intermediate}
Let \( s \in \mathbb{N} \), and let \( \mathcal{H} \subset H^{s + 2j + 1}(\mathbb{T}) \) be such that assumptions \eqref{assumptn_a_1} and \eqref{assumptn_a_2} are satisfied. Then the system \eqref{ctrl_prblm_without_initial_fr_proof} is approximately controllable in small time using \( \mathcal{H} \)-valued controls. More precisely, for any initial and target states \( u_0, u_1 \in H^{s + 2j + 1}_0(\mathbb{T}) \), and for sufficiently small \( \tau > 0 \), there exists a control \( \eta_\tau \in L^2((0, T\tau); \mathcal{H}) \) such that  
\begin{align}\label{lim_intermdt_thm}
\mathcal{R}_{j, T\tau}(u_0, \eta_\tau) \to u_1 \quad \text{in } H^s, \quad \text{as } \tau \to 0^+.
\end{align}
	
Moreover, the control \( \eta_\tau \) admits the representation
\begin{align}\label{form_of_ctrl_thm_intermediate}
\eta_\tau = \mathcal{C}_\tau(u_0, u_1) + \xi_\tau,
\end{align}
where 
\[
\mathcal{C}_\tau : H^s_0(\mathbb{T}) \times H^s_0(\mathbb{T}) \to L^2((0, T\tau); \mathcal{H})
\]
is a bounded linear operator with finite-dimensional range, and \( \xi_\tau \in L^2((0, T\tau); \mathcal{H}) \). Crucially, \( \xi_\tau \) is independent of the particular choices of \( u_0 \) and \( u_1 \), and the convergence in \eqref{lim_intermdt_thm} is uniform for all \( u_0, u_1 \) in bounded subsets of \( H^{s + 2j + 1}_0(\mathbb{T}) \).
\end{theorem}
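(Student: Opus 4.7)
The plan is to derive Theorem \ref{thm_intermediate} by feeding the approximate right--inverse machinery of Proposition \ref{prp_approx_right_inverse} into the lifting result Proposition \ref{prpn_closeness_solun}. Given $(u_0,u_1)$, I would first produce a control $g$ for the linearized Burgers system \eqref{linearized_Burger} driving $u_0$ approximately to $u_1$ \emph{in a way that depends linearly on the data}, and then transport that control to the nonlinear equation through the scaling formula \eqref{form_ctrl_as_solun_linear_equn}.

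Concretely, let $S:H^s_0\to H^s_0$ be the time-$T$ solution map of the homogeneous linearization $\partial_t v+\mathcal Q(v,w)=0$, and let $R:L^2((0,T);\mathcal H)\to H^s_0$, $Rg=v(T)$, be the control-to-state operator of \eqref{linearized_Burger} with $v(0)=0$. By linearity, the solution of \eqref{linearized_Burger} starting from $u_0$ with control $g$ takes the value $Su_0+Rg$ at time $T$, and assumption \eqref{assumptn_a_2} is exactly the statement that $\mathrm{Im}(R)$ is dense in $H^s_0$. I then invoke Proposition \ref{prp_approx_right_inverse} with $F=L^2((0,T);\mathcal H)$, $H=H^s_0$, and $V=H^{s+2j+1}_0$ (compactly embedded in $H^s_0$ by Rellich's theorem) to obtain, for each $\varepsilon>0$, a bounded linear operator $R_\varepsilon:H^s_0\to L^2((0,T);\mathcal H)$ with finite-dimensional range such that
\begin{equation*}
\|RR_\varepsilon h-h\|_s\le\varepsilon\|h\|_{s+2j+1},\qquad h\in H^{s+2j+1}_0.
\end{equation*}

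Now, for a suitable decay $\varepsilon(\tau)\downarrow0$ as $\tau\downarrow0^+$, set $g_\tau:=R_{\varepsilon(\tau)}(u_1-Su_0)$ and insert it into \eqref{form_ctrl_as_solun_linear_equn}. The first summand $\tau^{-1}g_\tau(\tau^{-1}\cdot)$ then defines $\mathcal C_\tau(u_0,u_1)$, which, being the composition of the bounded linear maps $(u_0,u_1)\mapsto u_1-Su_0$, $R_{\varepsilon(\tau)}$, and the rescaling, is a bounded linear operator with finite-dimensional range, while $\xi_\tau(t):=\tau^{-2}\xi(\tau^{-1}t)+\mathcal L_j w(\tau^{-1}t)$ is independent of $(u_0,u_1)$. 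Proposition \ref{prpn_closeness_solun} then gives $\mathcal R_{j,T\tau}(u_0,\eta_\tau)\to Su_0+Rg_\tau$ in $H^s$ as $\tau\to0^+$, while the approximate right-inverse estimate yields
\begin{equation*}
\|(Su_0+Rg_\tau)-u_1\|_s=\|RR_{\varepsilon(\tau)}(u_1-Su_0)-(u_1-Su_0)\|_s\le\varepsilon(\tau)\,\|u_1-Su_0\|_{s+2j+1},
\end{equation*}
which, together with the boundedness of $S$ on $H^{s+2j+1}_0$ (a consequence of the smoothness of $w$ from \eqref{assumptn_a_1} and of Proposition \ref{prp_stability}), is uniformly $o(1)$ on bounded subsets of $H^{s+2j+1}_0$.

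The main delicacy is that $g_\tau$ itself moves with $\tau$, so Proposition \ref{prpn_closeness_solun} must be applied with a non-stationary control. The cleanest way to handle this is to extract from the proof of Proposition \ref{prpn_closeness_solun} a quantitative bound of the form $\|\mathcal R_{j,T\tau}(u_0,\eta_\tau)-v(T)\|_s\le\rho(\tau)\Psi(\|u_0\|_{s+2j+1},\|g\|_{L^2})$ with $\rho(\tau)\to0$ and $\Psi$ locally bounded, and then to choose $\varepsilon(\tau)$ decaying slowly enough that $\|g_\tau\|_{L^2}$ remains bounded on bounded data while $\rho(\tau)\Psi\to0$; a standard diagonal argument then delivers the uniform convergence \eqref{lim_intermdt_thm} on bounded subsets of $H^{s+2j+1}_0(\mathbb T)$, and the representation \eqref{form_of_ctrl_thm_intermediate} follows by construction.
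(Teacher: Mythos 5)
Your proposal is correct and follows essentially the same route as the paper: apply Proposition \ref{prp_approx_right_inverse} to the control-to-state map of \eqref{linearized_Burger} with $V=H^{s+2j+1}_0$, set $g=T_\varepsilon\bigl(u_1-\mathcal{R}^{b,l}_T(u_0,0)\bigr)$, and transfer to the nonlinear system via Proposition \ref{prpn_closeness_solun} and the control formula \eqref{form_ctrl_as_solun_linear_equn}. The only cosmetic difference is that you couple $\varepsilon$ to $\tau$ through a diagonal argument, whereas the paper keeps the two tolerances $\theta$ and $\delta$ independent and lets both tend to zero at the end; your treatment of the $\tau$-dependence of $g_\tau$ and of the affine structure of the control is, if anything, slightly more explicit than the paper's.
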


Roughly speaking, for any given \( s \in \mathbb{N} \), Proposition \ref{prpn_closeness_solun} ensures that the trajectory of \eqref{ctrl_prblm_without_initial_fr_proof} at time \( T\tau \) remains sufficiently close in the \( H^s \)-norm to the solution of \eqref{linearized_Burger} at time \( T \), provided both start from a common initial state \( u_0 \in H^{s + 2j + 1}_0(\mathbb{T}) \) and are driven by \( \mathcal{H} \)-valued controls. Now, under assumption \( (\mathcal{A}_2) \), if \( \mathcal{H} \) satisfies the required condition, then the linearized equation \eqref{linearized_Burger} is approximately controllable at any time in \( H^{s + 2j + 1}_0(\mathbb{T}) \), with respect to the \( H^{s + 2j + 1}_0(\mathbb{T}) \)-norm, starting from zero. 

Thus, if we can strengthen this controllability result by ensuring that the control drives the solution from an initial to a final state in \( H^{s + 2j + 1}_0(\mathbb{T}) \), while achieving closeness in the \( H^s \)-norm and using \( \mathcal{H} \)-valued controls, we will obtain the desired result. To accomplish this, we invoke Proposition \ref{prp_approx_right_inverse}, and carry out the details in the proof that follows.

\begin{proof}[Proof of Theorem \ref{thm_intermediate}]
Given \( s \in \mathbb{N} \), we define the resolving operator
\[
\mathcal{R}^{b,l} : H^{s}_0(\mathbb{T}) \times L^2((0, T); \mathcal{H}) \to C([0, T]; H^{s}_0(\mathbb{T})), \quad (v_0, g) \mapsto v,
\]  
which maps the initial datum and forcing term to the solution \( v \) of the linearized inviscid Burgers equation \eqref{linearized_Burger} with initial condition \( v(0) = v_0 \). We denote by \( \mathcal{R}^{b,l}_t \) its evaluation at time \( t \in [0, T] \). 
From assumption \eqref{assumptn_a_2}, we have that the image of the operator  
\[
\mathcal{R}^{b,l}_T(0, \cdot) : L^2((0, T); \mathcal{H}) \to H^s_0(\mathbb{T})
\]  
is dense in \( H^s_0(\mathbb{T}) \), i.e.,  
\[
\operatorname{Im}\left( \mathcal{R}^{b,l}_T(0, \cdot) \right) \text{ is dense in } H^s_0(\mathbb{T}).
\]  
We apply Proposition~\ref{prp_approx_right_inverse} with the following identification of spaces:
\[
T := \mathcal{R}^{b,l}_T(0, \cdot), \quad H := H^s_0(\mathbb{T}), \quad F := L^2((0, T); \mathcal{H}), \quad \text{and} \quad V := H^{s + 2j + 1}_0(\mathbb{T}).
\]  
Then, for any \( \varepsilon > 0 \), there exists a continuous linear operator by Proposition \ref{prp_approx_right_inverse}
\[
T_\varepsilon : H^s_0(\mathbb{T}) \to L^2((0, T); \mathcal{H}),
\]  
with finite-dimensional range, such that for all \( h \in H^{s + 2j + 1}_0(\mathbb{T}) \), we have  
\[
\left\| \mathcal{R}^{b,l}_T(0, T_\varepsilon h) - h \right\|_{H^s} \leq \varepsilon \| h \|_{H^{s + 2j + 1}}.
\]

Let \( \theta > 0 \) be arbitrary but fixed. Choose initial and target states \( u_0, u_1 \in H^{s + 2j + 1}_0(\mathbb{T}) \). Then there exists a constant \( M > 0 \) such that  
\[
u_0, u_1 \in B_{H^{s + 2j + 1}_0(\mathbb{T})}(0, M),
\]  
the ball of radius \( M \) centered at the origin in \( H^{s + 2j + 1}_0(\mathbb{T}) \).  

By regularity of the transport equation \eqref{linearized_burger_start_at_delta}, we have  
\[
\mathcal{R}^{b,l}_T(u_0, 0) \in H^{s + 2j + 1}_0(\mathbb{T}).
\]  
We now apply the approximate right inverse estimate established earlier with  
\[
h := u_1 - \mathcal{R}^{b,l}_T(u_0, 0).
\]  
Let  
\[
g_\theta := T_\theta\left( u_1 - \mathcal{R}^{b,l}_T(u_0, 0) \right).
\]  
Then, by construction, we have  
\[
\left\| \mathcal{R}^{b,l}_T(0, g_\theta) - \left( u_1 - \mathcal{R}^{b,l}_T(u_0, 0) \right) \right\|_{H^s} \leq \theta \left\| u_1 - \mathcal{R}^{b,l}_T(u_0, 0) \right\|_{H^{s + 2j + 1}}.
\]  
Using the linearity of both the equation \eqref{linearized_Burger} and the associated resolving operator \( \mathcal{R}^{b,l}_T \), we deduce  
\[
\mathcal{R}^{b,l}_T(u_0, g_\theta) = \mathcal{R}^{b,l}_T(u_0, 0) + \mathcal{R}^{b,l}_T(0, g_\theta),
\]  
so for some constant \( C = C(M) > 0 \) depending only on \( M \). That is,  
\begin{align} \label{ineq_1}
	\left\| \mathcal{R}^{b,l}_T(u_0, g_\theta) - u_1 \right\|_{H^s} \leq \theta C.
\end{align}  
Thus, we have improved assumption \((\mathcal{A}_2)\) by demonstrating approximate controllability in the norm \( H^s_0(\mathbb{T}) \) for initial and target data in \( H^{s + 2j + 1}_0(\mathbb{T}) \).

We now apply Proposition~\ref{prpn_closeness_solun} with the previously chosen initial data \( u_0 \in H^{s + 2j + 1}_0(\mathbb{T}) \) and control \( g := g_\theta \in L^2((0, T); \mathcal{H}) \). Then, by the convergence result \eqref{lim_closness_solun}, for any \( \delta > 0 \), there exists \( \tau_1 > 0 \) and a control  
\[
\eta_{\tau_1} \in L^2((0, T\tau_1); \mathcal{H})
\]  
such that  
\begin{align} \label{ineq_2}
\left\| \mathcal{R}_{j, T\tau_1}(u_0, \eta_{\tau_1}) - v(T) \right\|_{H^s} \leq \delta,
\end{align}  
where \( v(T) = \mathcal{R}^{b,l}_T(u_0, g_\theta) \). Combining \eqref{ineq_1} and \eqref{ineq_2}, we obtain the estimate  
\begin{align} \label{fnl_ineq}
\left\| \mathcal{R}_{j, T\tau_1}(u_0, \eta_{\tau_1}) - u_1 \right\|_{H^s}
&\leq \left\| \mathcal{R}_{j, T\tau_1}(u_0, \eta_{\tau_1}) - v(T) \right\|_{H^s}
+ \left\| \mathcal{R}^{b,l}_T(u_0, g_\theta) - u_1 \right\|_{H^s} \\
&\leq \delta + \theta C. \nonumber
\end{align}

Since both \( \delta > 0 \) and \( \theta > 0 \) are arbitrary, the right-hand side of \eqref{fnl_ineq} can be made arbitrarily small. Therefore, for sufficiently small \( \tau > 0 \), there exists a control  
\[
\eta_\tau \in L^2((0, T\tau); \mathcal{H})
\]  
such that  
\[
\mathcal{R}_{j, T\tau}(u_0, \eta_\tau) \to u_1 \quad \text{in } H^s \quad \text{as } \tau \to 0^+.
\]  
This completes the proof of the limit \eqref{lim_intermdt_thm}, and the convergence in  is uniform for all \( u_0, u_1 \) in bounded subsets of \( H^{s + 2j + 1}_0(\mathbb{T}) \).
\end{proof}
The preceding theorem yields the following corollary as an immediate consequence.

\begin{corollary}\label{corollary_approx_in_Hs}
Suppose the assumptions of Theorem~\ref{thm_intermediate} hold. Then the system \eqref{ctrl_prblm_without_initial_fr_proof} is approximately controllable in time \( T > 0 \) using a control taking values in \( \mathcal{H} \). That is, for any \( \varepsilon > 0 \), and for any initial and final states \( u_0, u_1 \in H^s(\mathbb{T}) \), there exists a control \( \eta \in L^2((0, T); \mathcal{H}) \) such that
\[
\left\| \mathcal{R}_{j, T}(u_0, \eta) - u_1 \right\|_{H^s} < \varepsilon.
\]
\end{corollary}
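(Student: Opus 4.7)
The plan is to reduce Corollary \ref{corollary_approx_in_Hs} to Theorem \ref{thm_intermediate} by combining three ingredients: (i) a density argument to relax the regularity requirement from $H^{s+2j+1}_0$ down to $H^s_0$; (ii) free evolution of the uncontrolled equation on the bulk of $[0,T]$ to absorb the mismatch between the prescribed time $T$ and the short time $T\tau$ produced by the theorem; and (iii) the stability estimate of Proposition \ref{prp_stability} to propagate approximation errors through the nonlinear flow. Throughout I treat $u_0,u_1$ as mean-zero, consistent with the rest of the paper.

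Concretely, given $\varepsilon>0$ and $u_0,u_1 \in H^s_0(\mathbb{T})$, I first pick approximants $\tilde{u}_0,\tilde{u}_1 \in H^{s+2j+1}_0(\mathbb{T})$ with $\|u_i - \tilde{u}_i\|_{s} \leq \delta$ for $i=0,1$, where $\delta>0$ will be fixed at the end. For a separate small $\tau>0$, I split $[0,T] = [0, T-T\tau] \cup [T-T\tau, T]$, take the control to be zero on the first piece, and define
\[
u^* := \mathcal{R}_{j, T-T\tau}(u_0, 0), \qquad \tilde{u}^* := \mathcal{R}_{j, T-T\tau}(\tilde{u}_0, 0).
\]
By Proposition \ref{prp_existence} applied at regularity $s+2j+1$, $\tilde{u}^* \in H^{s+2j+1}_0$ and stays in a ball of this space whose radius is bounded uniformly for $\tau \in (0,1]$. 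Invoking Theorem \ref{thm_intermediate} with initial state $\tilde{u}^*$ and target $\tilde{u}_1$ then produces, for all sufficiently small $\tau$, a control $\eta_\tau \in L^2((0,T\tau);\mathcal{H})$ satisfying $\|\mathcal{R}_{j, T\tau}(\tilde{u}^*, \eta_\tau) - \tilde{u}_1\|_{s} \leq \varepsilon/3$; the uniformity clause of that theorem ensures a common threshold for $\tau$ as $\tilde{u}^*$ varies in the bounded envelope. Concatenating the zero control on $[0, T-T\tau]$ with a time-shifted copy of $\eta_\tau$ on $[T-T\tau, T]$ yields $\bar{\eta} \in L^2((0,T); \mathcal{H})$ with $\mathcal{R}_{j,T}(u_0, \bar{\eta}) = \mathcal{R}_{j, T\tau}(u^*, \eta_\tau)$.

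A triangle inequality combined with Proposition \ref{prp_stability} applied on each sub-interval then gives
\begin{align*}
\|\mathcal{R}_{j,T}(u_0,\bar{\eta}) - u_1\|_{s}
& \leq \|\mathcal{R}_{j,T\tau}(u^*,\eta_\tau) - \mathcal{R}_{j,T\tau}(\tilde{u}^*,\eta_\tau)\|_{s} + \|\mathcal{R}_{j,T\tau}(\tilde{u}^*,\eta_\tau) - \tilde{u}_1\|_{s} + \|\tilde{u}_1 - u_1\|_{s} \\
& \leq C\|u^* - \tilde{u}^*\|_{s} + \varepsilon/3 + \delta \leq (C^2+1)\delta + \varepsilon/3,
\end{align*}
where the last inequality applies Proposition \ref{prp_stability} a second time on $[0, T-T\tau]$ to bound $\|u^*-\tilde{u}^*\|_s \leq C\|u_0-\tilde{u}_0\|_s \leq C\delta$. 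Choosing $\delta < \varepsilon/(3(C^2+1))$ yields the required bound $<\varepsilon$.

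The subtlest point is the apparent circular dependence between $\delta$ and $\tau$: the smallness threshold for $\tau$ from Theorem \ref{thm_intermediate} depends on $\tilde{u}^*$ and $\tilde{u}_1$, while $\delta$ must be selected after $\tau$ so that $C^2\delta$ can be absorbed into $\varepsilon$. I resolve this by first picking $\tilde{u}_0,\tilde{u}_1$ with a provisional $\delta=1$ to fix a uniform ball in $H^{s+2j+1}_0$ that contains $\tilde{u}^*$ for all $\tau \in (0,1]$; then selecting $\tau$ via Theorem \ref{thm_intermediate} adapted to that ball; and finally shrinking $\delta$ without leaving the ball, so that $\tilde{u}_0,\tilde{u}_1$ may be re-chosen (still in the same envelope) with the tighter accuracy. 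Beyond this bookkeeping, no serious obstacle is expected, since the remainder is a standard triangle-inequality argument relying on stability and density.
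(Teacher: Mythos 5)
Your argument is correct, but it reverses the paper's decomposition of the time interval. The paper controls \emph{first}: it applies Theorem~\ref{thm_intermediate} on a short initial interval $[0,T\tau_1]$ to bring the state into a small ball $B(u_1,r)$, then uses continuity of the uncontrolled flow to keep the trajectory $\varepsilon$-close to $u_1$ for a time $t'$, and iterates these ``re-steering'' bursts finitely many times until time $T$ is reached. You instead drift freely on $[0,T-T\tau]$ and control only on the terminal interval, in a single shot. Both routes need the same density step to pass from $H^s_0$ to $H^{s+2j+1}_0$ data, and both sacrifice the affine control structure of \eqref{form_of_ctrl_thm_intermediate} (in your case because $\tilde{u}^*=\mathcal{R}_{j,T-T\tau}(\tilde{u}_0,0)$ depends nonlinearly on $\tilde{u}_0$, in the paper's because of the nonlinearity of $\mathcal{R}_{j,t}(u_1,0)$ in $u_1$). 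The trade-off is this: your single-shot argument leans harder on Theorem~\ref{thm_intermediate} --- specifically on the uniformity of the convergence over bounded subsets of $H^{s+2j+1}_0$, which you genuinely need because the starting point $\tilde{u}^*$ of the controlled leg moves with $\tau$ --- whereas the paper's iteration only ever restarts from states near the \emph{fixed} target $u_1$ and so needs less uniformity, at the cost of an iteration whose termination has to be checked. Your handling of the quantifier order (fix the bounded envelope with a provisional $\delta$, choose $\tau$ uniformly over that envelope, then shrink $\delta$) is the right way to break the apparent circularity, and the two applications of Proposition~\ref{prp_stability} are legitimate since mean-zero data is preserved and, once $\tau$ is fixed, the forcing $\eta_\tau$ stays in a bounded set as $\tilde{u}_0$ varies in the envelope (by boundedness of $\mathcal{C}_\tau$). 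A minor presentational point: you should state explicitly that the free flow conserves the mean, so that $[u^*]=[\tilde{u}^*]=0$ and Proposition~\ref{prp_stability} applies on the second sub-interval.
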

\begin{proof}
By density of \( H^{s + 2j + 1}_0(\mathbb{T}) \) in \( H^s_0(\mathbb{T}) \), Theorem \ref{thm_intermediate} ensures that for any \( u_0, u_1 \in H^s_0(\mathbb{T}) \), there exists a control \( \tilde{\eta}_\tau \in L^2((0, T\tau); \mathcal{H}) \) such that  
\[
\mathcal{R}_{j, T\tau}(u_0, \tilde{\eta}_\tau) \to u_1 \quad \text{in } H^s \text{ as } \tau \to 0^+.
\]

Now, by continuity of the flow in time and stability with respect to initial data (Propositions \ref{prp_existence}–\ref{prp_stability}), there exist \( r > 0 \), \( t' > 0 \) such that any trajectory starting in \( B(u_1, r) \) remains \( \varepsilon \)-close to \( u_1 \) for time \( t' \).

Choosing \( \tau_1 \) small so that \( \mathcal{R}_{j, T\tau_1}(u_0, \tilde{\eta}_{\tau_1}) \in B(u_1, r) \), we patch the control with zero beyond \( T\tau_1 \) to extend the trajectory. If \( T\tau_1 + t' \geq T \), we are done. Otherwise, we iterate this argument finitely many times until reaching time \( T \). Thus, the system is approximately controllable at time \( T \) in \( H^s_0(\mathbb{T}) \).
\end{proof}

\begin{remark}
The control \( \eta \) obtained in this corollary does not maintain the structure described in \eqref{form_of_ctrl_thm_intermediate} throughout the full time interval. In particular, the dependence on \( (u_0, u_1) \) is no longer affine once the zero control is applied in a neighborhood of \( u_1 \). This loss of structure is a consequence of the nonlinearity in the map \( \mathcal{R}_{j, t}(u_1, 0) \) with respect to \( u_1 \). Nevertheless, for any fixed \( \varepsilon, M > 0 \) and for initial and target states \( u_0, u_1 \in B_{H^{s + 2j + 1}_0(\mathbb{T})}(0, M) \), the part of the control on the interval \( [0, T\tau] \) still matches the form given in \eqref{form_of_ctrl_thm_intermediate}, while the portion on \( [T\tau, T] \) does not depend on \( u_0 \).
\end{remark}

\noindent As announced at the beginning of this section, we conclude by proving Proposition~\ref{prpn_closeness_solun}.
\begin{proof}[Proof of Proposition \ref{prpn_closeness_solun}] Step 1. Reduction:
Let \( M > 0 \), \( u_0 \in B_{H^{s + 2j + 1}_0(\mathbb{T}) }(0, M)\), and \( \eta \in L^2_{\mathrm{loc}}(\mathbb{R}^+, \mathcal{H}) \) be arbitrary. Denote by
\[
u(t) = \mathcal{R}_{j, t}(u_0, \eta), \quad t \in [0, T],
\]
the solution to \eqref{ctrl_prblm_without_initial_fr_proof} with initial condition \( u(0) = u_0 \).
	
Following the rescaling technique from \cite{Chapouly_KdV_2009}, we fix \( \tau > 0 \) and define the time-rescaled functions
\begin{align}\label{tau_time_scaling}
v_\tau(t) := v(\tau^{-1}t), \qquad &g_\tau(t) := \tau^{-1} g(\tau^{-1}t), \notag \\
w_\tau(t) := \tau^{-1} w(\tau^{-1}t), \qquad &\xi_\tau(t) := \tau^{-2} \xi(\tau^{-1}t), \notag \\
r(t) := u(t) - v_\tau(t) - w_\tau(t), \qquad &t \in [0, T\tau].
\end{align}
By construction and in view of \eqref{w_zero_both_end} and the identity \( v(0) = u_0 \), it follows that
\begin{equation}\label{boundary_of_r}
r(0) = 0, \qquad r(T\tau) = u(T\tau) - v(T).
\end{equation}
	
Therefore, in order to establish the limit \eqref{lim_closness_solun}, it suffices to construct, for sufficiently small \( \tau > 0 \), a control \( \eta_\tau \in L^2((0, T); \mathcal{H}) \) such that:
\begin{itemize}
\item[(i)] The map \( \mathcal{R}_{j, t}(u_0, \eta_\tau) \) is well-defined at $t = T\tau$;
\item[(ii)] The corresponding solution satisfies
\begin{equation}\label{limit_of_r}
\|r(T\tau)\|_s \longrightarrow 0 \quad \text{as } \tau \to 0^+,
\end{equation}
uniformly with respect to all initial data \( u_0 \in  B_{H^{s + 2j + 1}_0(\mathbb{T}) }(0, M) \), for each fixed \( s \in \mathbb{N} \).
\end{itemize}
Step 2. Proof of the above claims: 
The functions \( v_{\tau}(t) \) and \( w_{\tau}(t) \), defined on the interval \( [0, T] \), satisfy the following equations:
\begin{align*}
\partial_t v_{\tau} + \mathcal{Q}(v_{\tau}, w_{\tau}) &= g_{\tau}, \\
\partial_t w_{\tau} + \mathcal{B}(w_{\tau}) &= \xi_{\tau}.
\end{align*}
	
It follows that the remainder \( r(t) := u(t) - v_{\tau}(t) - w_{\tau}(t) \) solves the equation
\begin{equation}\label{equn_r}
\partial_t r + \mathcal{L}_j r + \mathcal{B}(r) + \mathcal{Q}(r, v_{\tau} + w_{\tau}) = \zeta_{\tau}, \quad t \in [0, T\tau],
\end{equation}
where the source term \( \zeta_{\tau} \) is given by
\begin{align*}
\zeta_{\tau} = \eta_{\tau} - \mathcal{L}_j(v_{\tau} + w_{\tau}) - \mathcal{B}(v_{\tau}) - g_{\tau} - \xi_{\tau}.
\end{align*}
	
We now choose the control \( \eta_{\tau} \in L^2((0, T\tau); \mathcal{H}) \) as
\begin{equation}\label{choice_of_control}
\eta_{\tau} := g_{\tau} + \xi_{\tau} + \mathcal{L}_j(w_{\tau}),
\end{equation}
in accordance with \eqref{form_ctrl_as_solun_linear_equn}. Substituting this expression into the formula for \( \zeta_{\tau} \), we obtain
\begin{equation}\label{source_term_zeta}
\zeta_{\tau} = -\mathcal{L}_j(v_{\tau}) - \mathcal{B}(v_{\tau}).
\end{equation}
	
This representation of the source term will play a central role in the analysis of the remainder \( r \) as \( \tau \to 0^+ \). From now on, we will use the notation \( P \lesssim Q \) to indicate that there exists a constant \( C > 0 \) such that \( P \leq C Q \).

Case \( s = 0 \). We multiply \eqref{equn_r} by \( 2r \), integrate over \( \mathbb{T} \), and then integrate in time over the interval \( t \in (0, T\tau] \). Using \eqref{boundary_of_r}, we obtain  
\begin{align} \label{est_s = 0}
	\| r(t)\|^2 & \lesssim \int_{0}^{t} \|r\|^2 \| v_{\tau} + w_{\tau} \|_{1} \, d\rho + \int_{0}^{t} \|r\| \| v_{\tau} \|_{2j + 1} \, d\rho + \int_{0}^{t} \|v_{\tau}\|_1^2 \|r\| \, d\rho.
\end{align}
Applying Young’s inequality \( ab \leq \frac{a^p}{p} + \frac{b^q}{q} \), with \( (p, q) = (2, 2) \) for the first term and \( (p, q) = (4, \tfrac{4}{3}) \) for the remaining two, we infer
\begin{align*}
	\| r(t)\|^2 \lesssim \int_{0}^{t} \|r\|^4 \, d\rho + \int_{0}^{t} \|w_{\tau}\|_1^2 \, d\rho + \int_{0}^{t} \|v_{\tau}\|_1^2 \, d\rho + \int_{0}^{t} \|v_{\tau}\|_{2j + 1}^{\frac{4}{3}} \, d\rho.
\end{align*}
A change of variables using \eqref{tau_time_scaling} yields
\begin{align}\label{norm_v_tau}
	\int_{0}^{t} \|v_{\tau}\|_{2j + 1}^{\frac{4}{3}} \, d\rho \leq \tau \int_{0}^{T} \|v\|_{2j + 1}^{\frac{4}{3}} \, d\rho.
\end{align}
Since \( v \in C([0, T]; H_0^{2j + 1}) \) solves the linearized equation \eqref{linearized_Burger} with initial condition \( v(0) = u_0 \), where \( u_0 \in B_{H_0^{2j + 1}(\mathbb{T})}(0, M) \), it follows that there exists a constant \( \varepsilon_{\tau} = \varepsilon_{\tau}(M) > 0 \), independent of \( t \in [0, T\tau] \), such that \( \varepsilon_{\tau} \to 0 \) as \( \tau \to 0^+ \). A similar estimate holds for the contribution of \( w_{\tau} \). Combining the previous estimates, we arrive at
\begin{align} \label{fnl_est_s=0}
\| r(t) \|^2 \lesssim \varepsilon_{\tau} + \int_0^t \| r(\rho) \|^4 \, d\rho, \quad \text{for all } t \in [0, T\tau].
\end{align}
Define the function
\[
\Psi(t) := \varepsilon_{\tau} + \int_0^t \| r(\rho) \|^4 \, d\rho.
\]
Then \eqref{fnl_est_s=0} implies \( \| r(t) \|^2 \lesssim \Psi(t) \), and differentiating yields
\[
\dot{\Psi}(t) = \| r(t) \|^4 \lesssim \Psi(t)^2.
\]
This gives the differential inequality \( \dot{\Psi}(t) \lesssim \Psi(t)^2 \), which implies
\[
\frac{d}{dt} \left( \frac{1}{\Psi(t)} \right) \gtrsim -1.
\]
Integrating over \( [0, t] \subset [0, T\tau] \), we obtain
\[
\Psi(t) \lesssim \frac{\varepsilon_{\tau}}{1 - C \varepsilon_{\tau} t}, \quad \text{for some constant } C > 0.
\]
Therefore, choosing \( \tau_0 \in (0, 1) \) sufficiently small such that \( C \varepsilon_{\tau} T\tau < 1/2 \) for all \( \tau \in (0, \tau_0) \), we conclude
\begin{align} \label{L^2_bound_of_r}
\| r(t) \|^2 \leq 2 \varepsilon_{\tau} < 1, \quad \text{for all } t \in [0, T\tau].
\end{align}

Consider the Case, when $s = j,$ multiplying \eqref{equn_r} with $\mathcal{I} := -r^2 + 2 (-1)^j \partial_x^{2j} r,$ it follows that
\begin{align*}
&\langle \dot{r}, \mathcal{I} \rangle = - \frac{1}{3} \frac{d}{dt} \int_{\mathbb{T}} r^3 dx + \frac{d}{dt} \| \partial_x^j r \|^2,\\
&\langle \mathcal{L}_j r, \mathcal{I} \rangle = 2 (-1)^{j + 3} \int_{\mathbb{T}} r r_x \partial_x^{2j} r dx,\\
&\langle r r_x, \mathcal{I} \rangle = 2 (-1)^j \int_{\mathbb{T}} r r_x \partial_x^{2j} r dx,\\
&|\langle \mathcal{Q}(r, v_{\tau} + w_{\tau}) \rangle| = \left|- \frac{2}{3} \int_{\mathbb{T}}r^3 (v_{\tau} + w_{\tau})_x  \ dx + (-1)^j \int_{\mathbb{T}} \Big((v_{\tau} + w_{\tau}) r_x + r (v_{\tau} + w_{\tau})_x\Big) \partial_x^{2j + 1} r \ dx \right| \\
& \hspace{2.6cm} \le C (\|(v_{\tau} + w_{\tau})\|_1 \|r\| \|r_x\|^2 + \|(v_{\tau} + w_{\tau})\|_{2j + 1} \|r\|_1 + \| r\|_{\infty} \|(v_{\tau} + w_{\tau})\|_{2j + 1} \|r_x\| ),\\
& |\langle \mathcal{L}_j r, \mathcal{I}_1\rangle| \le C\Big(\|r\|^2 \| v_{\tau}\|_{2j + 1} + \|\partial_x^j r\| \|v_{\tau}\|_{3j + 1}\Big),\\
& |\langle \mathcal{B}(v_{\tau}), \mathcal{I}\rangle| \le C \Big(\| v_{\tau}\|_2^2 \| r\|^2 + \| v_{\tau}\|_{2j + 1}^2 \|r\|\Big),
\end{align*}
We have used the Cauchy–Schwarz and Poincaré inequalities in the above estimates. As \( s = j \geq 1 \). Using \eqref{norm_v_tau}, along with the subsequent analysis leading to find \( \varepsilon_{\tau} \), and the bound \eqref{L^2_bound_of_r}, as well as the Sobolev embedding \( H^j \hookrightarrow L^{\infty} \), integrating the relevant estimate over the time interval \([0,t]\), and invoking \eqref{L^2_bound_of_r} and \eqref{boundary_of_r}, we  obtain
\begin{align*}
\int_{0}^{t}\left(- \frac{1}{3} \frac{d}{dt} \int_{\mathbb{T}} r^3 \, dx + \frac{d}{dt} \| \partial_x^j r \|^2\right) \ d\rho \lesssim \varepsilon_{\tau} + \int_0^t \| \partial_x^jr(\rho) \|^4 \, d\rho, \quad \text{for all } t \in [0, T\tau].
\end{align*}
So we have,
\begin{align*}
\| \partial_x^j r(t) \|^2 
&\lesssim \int_{\mathbb{T}} |r(t)|^3 \, dx + \varepsilon_{\tau} + \int_0^t \| \partial_x^j r(\rho) \|^4 \, d\rho, \\
&\lesssim \|r(t)\|_{L^\infty} \|r(t)\|^2 + \varepsilon_{\tau} + \int_0^t \|  \partial_x^j r(\rho) \|^4 \, d\rho,  \\
&\lesssim \| \partial_x^j r(t) \| \|r(t)\|^2 + \varepsilon_{\tau} + \int_0^t \| \partial_x^j r(\rho) \|^4 \, d\rho, .
\end{align*}
Applying a Young-type inequality to the first term on the right-hand side yields
\begin{align*}
\| \partial_x^j r(t) \|^2 
&\le \frac{1}{2} \| \partial_x^j r(t) \|^2 + C \| r(t) \|^4 + \varepsilon_{\tau} + \int_0^t \| \partial_x^j r(\rho) \|^4 \, d\rho, ,
\end{align*}
which implies, upon absorbing the first term on the right-hand side into the left,
\begin{align*}
\| \partial_x^j r(t) \|^2 
&\lesssim \| r(t) \|^4 + \varepsilon_{\tau} + \int_0^t \| \partial_x^j r(\rho) \|^4 \, d\rho, .
\end{align*}
Making use of the previously established bound \eqref{L^2_bound_of_r} and the fact that \(\varepsilon_\tau < 1\), hence \(\varepsilon_\tau^p < \varepsilon_\tau\) for any \(p > 1\), we deduce that
\begin{align*}
\| \partial_x^j r(t) \|^2 
&\lesssim \varepsilon_{\tau} + \int_0^t \| \partial_x^j r(\rho) \|^4 \, d\rho, .
\end{align*}
An application of Grönwall’s inequality then gives
\begin{align*}
\| \partial_x^j r(t) \|^2 
&\lesssim \varepsilon_\tau, \quad \text{for all } t \in [0, T\tau].
\end{align*}
Finally, combining this estimate with \eqref{L^2_bound_of_r} and appealing to an interpolation argument, we conclude that for any \(s \in [1, j] \cap \mathbb{N}\), the following holds:
\begin{align}\label{H^s_est_for_r_1_to_j}
\| r(t) \|_s^2 \lesssim \varepsilon_\tau, \quad \forall t \in [0, T\tau].
\end{align}

To establish \eqref{H^s_est_for_r_1_to_j} for \( s \ge j + 1 \), we proceed by induction on \( s \in \mathbb{N} \). Assume that the estimate \eqref{H^s_est_for_r_1_to_j} holds for some \( s - 1 \ge j \ge 2 \). Once this induction is completed, the bound \eqref{limit_of_r} will follow for all \( j \ge 2 \) and \( s \in \mathbb{N} \). Hence, in order to conclude the proof of \eqref{limit_of_r} for all \((j, s) \in \mathbb{N}^* \times \mathbb{N}\), it remains to consider the remaining case \( j = 1 \) and \( s = 2 \).

Once both the inductive step and this special case are addressed, we can infer the desired bound for arbitrary \( j \in \mathbb{N}^* \) and \( s \in \mathbb{N} \), by combining the previous estimates \eqref{L^2_bound_of_r}, \eqref{H^s_est_for_r_1_to_j}, and the induction argument.

We begin with the inductive step. Multiplying equation \eqref{equn_r} by \( \partial_x^{2s} r \) and integrating over the spatial domain, we obtain:
\begin{align*}
\frac{1}{2} \frac{d}{dt} \| \partial_x^s u\|^2 \lesssim \frac{1}{2} |\langle \partial_x^{s + 1}r^2, \partial_x^s r \rangle| +  |\langle \partial_x^{s + 1} \Big((v_{\tau} + w_{\tau}) r\Big), \partial_x^s r\rangle| + \| v_{\tau}\|_{s + 2j + 1} \| \partial_x^s r\| + \| v_{\tau}\|^2_{s + 1} \| \partial_x^s r\|.
\end{align*}

It is easy to verify that
\[
\left| \left\langle \partial_x^{s+1}(r^2),\ \partial_x^s r \right\rangle \right| \leq C(s) \sum_{m=0}^{s+1} \left| \int_{\mathbb{T}} \partial_x^m r\ \partial_x^{s+1-m} r\ \partial_x^s r\ dx \right|.
\]
We estimate each term on the right-hand side using standard Sobolev inequalities. For \( m = 0 \) and \( m = s+1 \), we integrate by parts and obtain
\[
\left| \int_{\mathbb{T}} r\ \partial_x^{s+1} r\ \partial_x^s r\ dx \right| = \frac{1}{2} \left| \int_{\mathbb{T}} \partial_x r\ (\partial_x^s r)^2\ dx \right| \leq C \|\partial_x r\|_{\infty} \|\partial_x^s r\|^2 \leq C \|r\|_{2} \|\partial_x^s r\|^2.
\]
Similarly, for \( m = 1 \) and \( m = s \), we have
\[
\left| \int_{\mathbb{T}} \partial_x r\ (\partial_x^s r)^2\ dx \right| \leq \|\partial_x r\|_{\infty} \|\partial_x^s r\|^2 \leq C \|r\|_{2} \|\partial_x^s r\|^2.
\]
For \( 2 \leq m \leq s - 1 \), we apply the Sobolev embedding \( H^{1}(\mathbb{T}) \hookrightarrow L^\infty(\mathbb{T}) \), together with the Cauchy–Schwarz and Poincaré inequalities, to estimate
\begin{align*}
\left| \int_{\mathbb{T}} \partial_x^m r\ \partial_x^{s+1 - m} r\ \partial_x^s r\ dx \right| 
& \leq \|\partial_x^m r\|_{L^\infty} \int_{\mathbb{T}} \left| \partial_x^{s+1 - m} r \ \partial_x^s r \right| dx \\
& \leq C \|r\|_{{m+1}} \|\partial_x^{s+1 - m} r\| \|\partial_x^s r\| \\
& \leq C \|r\|_{s} \|r\|_{{s-1}} \|\partial_x^s r\| \\
& \leq C \left( \|r\| + \|\partial_x^s r\| \right) \|r\|_{{s-1}} \|\partial_x^s r\|.
\end{align*}
Putting together all the estimates above, we arrive at
\[
\left| \left\langle \partial_x^{s+1}(r^2),\ \partial_x^s r \right\rangle \right| \lesssim \left( \|r\|_{2} \|\partial_x^s r\|^2 + \|r\| \|r\|_{{s-1}} \|\partial_x^s r\| + \|r\|_{{s-1}} \|\partial_x^s r\|^2 \right).
\]
Similarly, we can show:  
\[
\left| \left\langle  \partial_x^{s + 1} \Big((v_{\tau} + w_{\tau}) r\Big),\ \partial_x^s r \right\rangle \right| \lesssim \Big(\|(v_{\tau} + w_{\tau})\|_{s+2} \|r\| \|\partial_x^s r\| + \|(v_{\tau} + w_{\tau})\|_{s+2} \|\partial_x^s r\|^2\Big).
\]  
according to \eqref{H^s_est_for_r_1_to_j} and the fact \eqref{H^s_est_for_r_1_to_j} holds for $s - 1,$ using Young inequality, \eqref{norm_v_tau}, \eqref{L^2_bound_of_r} and the fact $\varepsilon_{\tau} < 1,$ we conclude that 
\begin{align*}
\| \partial_x^s r(t) \|^2 
&\lesssim \varepsilon_{\tau} + \int_0^t \| \partial_x^s r(\rho) \|^4 \, d\rho, .
\end{align*}
Again by the bound of $\Psi,$ it is clear that 
\begin{align}\label{H^s_bdd_r_for_ge_j+1}
\|  \partial_x^s r(t) \|^2 \lesssim \varepsilon_{\tau}, \ \text{ for } t \in [0, T\tau].
\end{align}

Combining the cases \( j \in \mathbb{N}^*,\ s = 0 \), and \( s = j \geq 1 \), and the inductive argument for \( s \geq j+1 \) with \( j \geq 2 \), we obtain the estimate  
\begin{align}\label{j_ge_2_any_s_norm_r}
\text{for all } j \geq 2, \text{ for all } s \in \mathbb{N}\quad \| r(t) \|_s^2 \lesssim \varepsilon_{\tau} < 1, \quad \text{for all } t \in [0, T\tau].
\end{align}
We now treat the remaining case \( j = 1,\ s = 2 \). Consider the multiplier  
\[
\mathcal{I}_2 := \frac{18}{5} \partial_x^4 r + 3 (\partial_x r)^2 + 6 r \partial_x^2 r + u^3,
\]
and test equation \eqref{equn_r} with \( j = 1 \) against \( \mathcal{I}_2 \). Proceeding with an energy estimate analogous to the higher-order cases, we obtain
\begin{align}\label{norm_r_j=1,s=2}
	\text{for } j = 1, \quad \| r(t) \|_{2}^2 \lesssim \varepsilon_{\tau} < 1, \quad \text{for all } t \in [0, T\tau].
\end{align}
Combining this with the previously treated cases \( s = 0 \) and \( s = j = 1 \), together with the inductive step in \( s \), we deduce that
\begin{align}\label{norm_r_j=1,any_s}
	\text{for } j = 1,\quad \text{for all } s \in \mathbb{N}, \quad \| r(t) \|_{H^s}^2 \lesssim \varepsilon_{\tau} < 1, \quad \text{for all } t \in [0, T\tau].
\end{align}
Finally, combining \eqref{j_ge_2_any_s_norm_r} and \eqref{norm_r_j=1,any_s}, we conclude the proof of \eqref{limit_of_r}.
\end{proof}

\section{More general setup of main result}\label{section_general_result}
This section is devoted to the statement and proof of the main result of the article. As a consequence, we obtain the proof of the theorem stated in the introduction. In addition, we demonstrate that there exist different finite-dimensional subspaces \( \mathcal{H} \subset L^2(\mathbb{T}) \) for which assumptions \eqref{assumptn_a_1} and \eqref{assumptn_a_2} are satisfied.
\subsection{General formulation} Let \( \mathcal{J} \subset \mathbb{Z}^* \) be a finite symmetric set, that is, \( \mathcal{J} = -\mathcal{J} \). We say that \( \mathcal{J} \) is a generator of \( \mathbb{Z} \) if the set of all integer linear combinations of elements of \( \mathcal{J} \), defined by  
\begin{align}\label{linear_combination_of_J}
\langle \mathcal{J} \rangle_{\mathbb{Z}} := \left\{ \sum_{i=1}^{m} \lambda_i a_i \;\middle|\; \lambda_i \in \mathbb{Z},\ a_i \in \mathcal{J},\ m \geq 1 \right\},
\end{align}  
coincides with \( \mathbb{Z} \).

Given such a set \( \mathcal{J} \subset \mathbb{Z}^* \), we define a sequence of nested finite-dimensional subspaces of \( L^2(\mathbb{T}) \) as follows:
\begin{align}
\mathcal{H}_0^{\mathcal{J}} &:= \operatorname{span}_{\mathbb{R}} \left\{ \sin(lx), \cos(lx) \ :\ l \in \mathcal{J} \right\}, \label{H_0_finite_subspace}\\
\mathcal{H}_k^{\mathcal{J}} &:= \operatorname{span}_{\mathbb{R}} \left\{ \psi_1 + \mathcal{Q}(\psi_2, \phi) \ :\ \psi_1, \psi_2 \in \mathcal{H}_{k-1}^{\mathcal{J}},\ \phi \in \mathcal{H}_0^{\mathcal{J}} \right\}, \quad k \geq 1, \label{H_k_finite_subspace}
\end{align}
where \( \mathcal{Q} \) is the bilinear form defined in \eqref{defn_Q_B(v,w)}.

\begin{definition}
A finite symmetric set \( \mathcal{J} \subset \mathbb{Z}^* \) is said to be saturating if the union \( \bigcup_{k=0}^\infty \mathcal{H}_k^{\mathcal{J}} \) is dense in \( H^s(\mathbb{T}) \) for every \( s \geq 0 \).
\end{definition}
We next provide a useful criterion for checking whether a given finite set is saturating. The proof will be given in the following section.

\begin{lemma}\label{lemma_saturating}
Let \(\mathcal{J} \subset \mathbb{Z}^*\) be a finite symmetric set. If \(\mathcal{J}\) generates \(\mathbb{Z}\), that is, if $\langle \mathcal{J} \rangle_{\mathbb{Z}} = \mathbb{Z} $, then \(\mathcal{J}\) is saturating.
\end{lemma}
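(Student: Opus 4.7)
The plan is to pass to Fourier and reduce the lemma to the combinatorial statement that every nonzero integer occurs as a Fourier frequency of some element of $\bigcup_k \mathcal{H}_k^{\mathcal{J}}$. The key structural observation is that $\mathcal{Q}(v,w) = v\partial_x w + w\partial_x v = \partial_x(vw)$, so on the complex exponentials $e_n(x) := e^{inx}$ one has the clean formula $\mathcal{Q}(e_n, e_m) = i(n+m)\, e_{n+m}$. Let $\mathcal{H}_k^{\mathbb{C}}$ denote the complexification of $\mathcal{H}_k^{\mathcal{J}}$ and set $\tilde{S}_k := \{n \in \mathbb{Z}^* : e_n \in \mathcal{H}_k^{\mathbb{C}}\}$. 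Since $\mathcal{J}$ is symmetric, $\tilde{S}_0 = \mathcal{J}$, and each $\tilde{S}_k$ is symmetric because $\mathcal{H}_k$ is real and hence closed under complex conjugation; moreover, every element of $\mathcal{H}_k$ has zero mean, so $0 \notin \tilde{S}_k$ automatically.

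Next I would establish the following closure property for $\tilde{S}:=\bigcup_k \tilde{S}_k$: if $n \in \tilde{S}$, $m \in \mathcal{J}$, and $n+m\neq 0$, then $n+m \in \tilde{S}$. Choosing $\psi_1 = 0$, $\psi_2 = e_n$ and $\phi = e_m$ in the defining formula \eqref{H_k_finite_subspace} produces $i(n+m)\,e_{n+m} \in \mathcal{H}_k^{\mathbb{C}}$, and the nonvanishing of $i(n+m)$ yields $e_{n+m} \in \mathcal{H}_k^{\mathbb{C}}$. Choosing $\psi_2=0$ gives $\mathcal{H}_{k-1} \subseteq \mathcal{H}_k$, so the partial operation $n \mapsto n+l$, $l\in\mathcal{J}$, stabilizes $\tilde{S}$ whenever its output is nonzero.

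The crux is to show $\tilde{S} = \mathbb{Z}^*$. Since $\mathcal{J}$ is symmetric and $\langle\mathcal{J}\rangle_{\mathbb{Z}} = \mathbb{Z}$, one has $\gcd(\mathcal{J}^+) = 1$, where $\mathcal{J}^+ := \mathcal{J}\cap \mathbb{Z}_{>0}$. By the Sylvester--Frobenius (Chicken McNugget) theorem there exists $N_0$ such that every integer $\geq N_0$ is a finite sum of elements of $\mathcal{J}^+$; starting from any $l_0 \in \mathcal{J}^+ \subseteq \tilde{S}_0$ and iteratively adding elements of $\mathcal{J}^+$, all partial sums stay strictly positive, so every integer $\geq N_0$ lies in $\tilde{S}$. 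For $1 \leq n < N_0$, fix $l \in \mathcal{J}^+$ and choose $k$ with $n+kl \geq N_0$, so $n+kl \in \tilde{S}$; repeatedly adding $-l \in \mathcal{J}$ (using symmetry) produces the strictly positive chain $n+(k-1)l, n+(k-2)l, \ldots, n$, each link belonging to $\tilde{S}$ by the closure property. Symmetry of $\tilde{S}$ then yields the negative integers, so $\tilde{S} = \mathbb{Z}^*$.

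Once $\tilde{S} = \mathbb{Z}^*$, $\bigcup_k \mathcal{H}_k^{\mathcal{J}}$ contains $\sin(nx)$ and $\cos(nx)$ for every $n\geq 1$, hence every real mean-zero trigonometric polynomial, and the required density in $H^s(\mathbb{T})$ (mean-zero part, as used throughout the paper) for every $s\geq 0$ is then standard. The main analytical obstacle lies in the third paragraph: a naive Bezout-style expression of a target integer as a signed combination of elements of $\mathcal{J}$ need not be realizable as a valid path inside $\tilde{S}$, because intermediate partial sums may vanish. The resolution is to confine the entire path to a single half-line, using Sylvester--Frobenius to first reach large positives and then descend by adding $-l$ for $l \in \mathcal{J}^+$.
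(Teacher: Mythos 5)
Your proof is correct, and it takes a genuinely different route from the paper's. The paper never passes to complex exponentials: it introduces the frequency sets $\mathcal{J}_0=\mathcal{J}$, $\mathcal{J}_{k+1}=\mathcal{J}_k+\mathcal{J}$, observes that symmetry lets one rewrite any $\sum_i\lambda_i a_i$ as a sum of $\sum_i|\lambda_i|$ elements of $\mathcal{J}$ so that $\bigcup_k\mathcal{J}_k=\mathbb{Z}$, and then proves by a double induction the inclusion $\mathcal{H}_i^{\mathcal{J}_k}\subset\mathcal{H}_{i+1}^{\mathcal{J}_{k-1}}$ using the real product formulas for $\mathcal{Q}(\cos(\alpha x),\cos(\beta x))$ and $\mathcal{Q}(\sin(\alpha x),\cos(\beta x))$. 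Your version replaces that bookkeeping with the single identity $\mathcal{Q}(e_n,e_m)=i(n+m)e_{n+m}$ (from $\mathcal{Q}(v,w)=\partial_x(vw)$) together with one closure property of the reachable frequency set, which trivializes the algebra; the price is that you must exhibit, for each target $n\neq 0$, a chain of additions of elements of $\mathcal{J}$ whose partial sums never vanish, and your Sylvester--Frobenius/half-line descent handles exactly that. This is in fact a point where your argument is more careful than the paper's: the sets $\mathcal{J}_k$ as defined there can contain $0$ (e.g.\ $a+(-a)$), the coefficient $\alpha+\beta$ in the product formulas vanishes at frequency zero, and the claimed inclusion $\mathcal{H}_0^{\mathcal{J}_k}\subset\mathcal{H}_1^{\mathcal{J}_{k-1}}$ fails for the constant mode; the paper implicitly discards it (all $\mathcal{H}_k^{\mathcal{J}}$ are mean-zero), and your nonvanishing-partial-sums argument is precisely the repair one would make explicit. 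Both proofs conclude identically, via density of mean-zero trigonometric polynomials in $H^s_0(\mathbb{T})$, which is also the correct reading of the paper's definition of saturation.
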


\begin{remark}
It is a basic fact from algebra that a finite set \(\mathcal{J} \subset \mathbb{Z}^*\) generates \(\mathbb{Z}\) if and only if \(\gcd(l_1, l_2, \dots, l_n) = 1\), for collection of nonzero elements \(l_1, l_2, \dots, l_n \in \mathcal{J}\). Here, we allow the use of negative integers among the divisors to avoid ambiguity.
	
It is important to note that the symmetry assumption in Lemma~\ref{lemma_saturating} is essential. Indeed, there exist generating sets \(\mathcal{J} = \{l_1, l_2\} \subset \mathbb{Z}^+\) with \(\gcd(l_1, l_2) = 1\) which do not satisfy the saturating condition. This distinction will become clearer in the proof presented in the next section.
\end{remark}
We now state a more general result that encompasses both the main theorem presented in the introduction and the result of Section~\ref{section-aaprox_control}. The proof is carried out in the subsequent two subsections.

\begin{theorem}\label{thm_more_general}
Let $\mathcal{J} \subset \mathbb{Z}^*$ be a saturating set. Then the assumptions \eqref{assumptn_a_1} and \eqref{assumptn_a_2} are fulfilled for the subspace $\mathcal{H} = \mathcal{H}_1^{\mathcal{J}}$. Consequently, the conclusions of Theorem~\ref{thm_intermediate} and Corollary~\ref{corollary_approx_in_Hs} remain valid.
\end{theorem}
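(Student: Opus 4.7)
The plan is to verify the two assumptions \eqref{assumptn_a_1} and \eqref{assumptn_a_2} directly for $\mathcal{H} := \mathcal{H}_1^{\mathcal{J}}$; once this is done, Theorem~\ref{thm_intermediate} and Corollary~\ref{corollary_approx_in_Hs} deliver the remaining conclusions.

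For \eqref{assumptn_a_1}, I would fix a basis $\{\psi_i\}_{i=1}^N$ of $\mathcal{H}_0^{\mathcal{J}}$ (the sines and cosines indexed by $\mathcal{J}$) and pick scalar functions $\varphi_i \in H^1((0,T))$ satisfying $\varphi_i(0) = \varphi_i(T) = 0$ such that $\{\varphi_i\}_{i=1}^N$ forms an observable family in the sense of Definition~\ref{defn_Observable_family}. Setting $w(t,x) := \sum_{i=1}^N \varphi_i(t)\psi_i(x)$, the conditions $w(0) = w(T) = 0$ are immediate, and because $\partial_x^{2j+1}$ sends each $\sin(lx), \cos(lx)$ back into $\mathcal{H}_0^{\mathcal{J}}$, we get $\mathcal{L}_j w(t) \in \mathcal{H}_0^{\mathcal{J}} \subset \mathcal{H}$. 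A direct computation shows
\[
\xi \;:=\; \partial_t w + \mathcal{B}(w) \;=\; \sum_{i=1}^N \varphi_i'(t)\,\psi_i(x) \;+\; \tfrac{1}{2}\sum_{i,k=1}^N \varphi_i(t)\varphi_k(t)\, \mathcal{Q}(\psi_i,\psi_k),
\]
which belongs to $\mathcal{H}_1^{\mathcal{J}}$ pointwise in $t$ by the defining relation~\eqref{H_k_finite_subspace}, and to $L^2((0,T); \mathcal{H})$ in view of the regularity of $\varphi_i$.

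For \eqref{assumptn_a_2}, I would argue by duality: density of the attainable set is equivalent to showing that if $p \in H^s_0$ satisfies $\int_0^T \langle g(t), q(t)\rangle\, dt = 0$ for every $g \in L^2((0,T); \mathcal{H})$, where $q$ solves the adjoint transport equation $\partial_t q + w\,\partial_x q = 0$ with terminal data $q(T) = p$, then $p = 0$. The hypothesis forces $\Pi_{\mathcal{H}} q(t) = 0$ a.e., and time-continuity of $q$ promotes this to $\langle q(t), \Phi\rangle \equiv 0$ for every $\Phi \in \mathcal{H}_1^{\mathcal{J}}$ and every $t \in [0,T]$. The crux is then an induction on $k \ge 1$ showing $q(t) \perp \mathcal{H}_k^{\mathcal{J}}$ for every $t$. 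Assuming the claim for $k$ and picking $\Psi \in \mathcal{H}_k^{\mathcal{J}}$, differentiate $\langle q(t), \Psi\rangle \equiv 0$ and use $\partial_t q = -w\partial_x q$ to get
\[
0 \;=\; \frac{d}{dt}\langle q(t), \Psi\rangle \;=\; \langle q(t), (w\Psi)_x\rangle \;=\; \sum_{i=1}^N \varphi_i(t)\,\langle q(t), \mathcal{Q}(\psi_i, \Psi)\rangle.
\]
The coefficients $a_i(t) := \langle q(t), \mathcal{Q}(\psi_i, \Psi)\rangle$ lie in $C^1([0,T])$ by the smoothness of $q$, so the observability of $\{\varphi_i\}$ forces each $a_i$ to vanish identically. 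Because $\{\psi_i\}$ spans $\mathcal{H}_0^{\mathcal{J}}$ and $\mathcal{Q}$ is symmetric, this yields $q(t) \perp \mathcal{Q}(\phi,\Psi)$ for every $\phi \in \mathcal{H}_0^{\mathcal{J}}$; combined with the inductive hypothesis this is orthogonality to every generator $\psi_1 + \mathcal{Q}(\psi_2,\phi)$ of $\mathcal{H}_{k+1}^{\mathcal{J}}$. The saturation property then makes $\bigcup_k \mathcal{H}_k^{\mathcal{J}}$ dense in $H^s$, so $q(t)\equiv 0$ and $p = 0$.

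The main obstacle is the construction of the observable family $\{\varphi_i\}$ with $H^1$ regularity and vanishing endpoints: the step-function example presented after Definition~\ref{defn_Observable_family} is merely $L^\infty$ with singular distributional derivative, so it cannot be inserted directly into the separated-variable ansatz for $w$. Producing a sufficiently irregular yet $H^1$-smooth observable family---whether by adapting the construction of Nersesyan in the Navier--Stokes setting, or by replacing the jumps in the example with steep continuous transitions on a dense set of scales---is the genuine technical kernel of the proof; every other step reduces to standard energy identities and the algebraic structure of the iterated brackets $\mathcal{H}_k^{\mathcal{J}}$.
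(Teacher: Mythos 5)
Your overall architecture matches the paper's: a separated-variables reference trajectory for \eqref{assumptn_a_1}, and for \eqref{assumptn_a_2} an orthogonality induction through the spaces $\mathcal{H}_k^{\mathcal{J}}$ driven by an observable family and closed by the saturation hypothesis. But there is a genuine gap at exactly the point you flag as ``the main obstacle,'' and it is not merely technical: an observable family in the sense of Definition~\ref{defn_Observable_family} can \emph{never} consist of continuous functions. Indeed, if $\varphi_1$ were continuous on $[0,T]$ one could take $a_1 \equiv 1$, $a_i \equiv 0$ for $i \ge 2$, and $b := -\varphi_1 \in C^0$, so that \eqref{observable_cndn} holds while $a_1 \not\equiv 0$. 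Since $H^1((0,T);\mathbb{R})$ embeds into $C^0([0,T];\mathbb{R})$, there is no $H^1$ observable family, and ``steep continuous transitions'' cannot repair this --- continuity itself, not the size of the jumps, is what destroys observability. So your ansatz $w = \sum_i \varphi_i(t)\psi_i(x)$ with the $\varphi_i$ simultaneously in $H^1_0$ and observable is vacuous, and the identity $\sum_i \varphi_i(t)\,a_i(t) = 0$ obtained by one time differentiation cannot be fed into Definition~\ref{defn_Observable_family}.

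The paper's resolution, which your sketch misses, is to bury the observable family in $\dot w$ rather than in $w$: it sets $\vartheta_l^{s}(t) = \varTheta(t)\int_0^t \varphi_l^{s}(\rho)\,d\rho$ (similarly for the cosine coefficients), with $\varTheta \in C^1$ vanishing only at $t=T$, so that the coefficients of $w$ lie in $W^{1,2}_0((0,T))$ while their derivatives contain the rough terms $\varTheta(t)\varphi_l^{s}(t)$. These are then extracted by differentiating the orthogonality relation with respect to the \emph{initial} time $\delta$ of the two-parameter resolving operator $\mathbf{R}(t,\delta)$ (equivalently, in your adjoint formulation, by differentiating $\langle q(t),\Psi\rangle$ a second time, using that the first derivative already vanishes identically); this produces an identity $b(\delta) + \sum_{l}\bigl(a_l^s\varphi_l^s + a_l^c\varphi_l^c\bigr)(\delta) = 0$ with $b \in C^0$ and $a_l^{s,c}(\delta) = \varTheta(\delta)\langle \mathcal{Q}(\cdot,g),z_1\rangle_s \in C^1$, to which the observability definition genuinely applies. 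With this modification your duality argument goes through and is essentially the paper's proof (the paper phrases it via the triviality of $\ker\mathcal{A}_1^*$ and the forward flow rather than the backward adjoint transport equation, and works in the $H^s$ inner product, but these differences are cosmetic). Your verification of \eqref{assumptn_a_1} is correct as far as it goes, but it must be carried out for the modified coefficients $\vartheta_l^{s,c}$, which the paper checks still belong to $W^{1,2}_0((0,T))$ and vanish at both endpoints.
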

As a direct consequence of Theorem~\ref{thm_more_general}, Lemma~\ref{lemma_saturating}, and Theorem~\ref{thm_intermediate}, we obtain the main result and the corollary stated in the introduction.

\begin{proof}[Proof of the main theorem and corollary stated in the introduction]
Let us take \(\tilde{\mathcal{J}} = \{\pm 1\} \subset \mathbb{Z}^*\). Since \(\tilde{\mathcal{J}}\) generates \(\mathbb{Z}\), it follows from Lemma~\ref{lemma_saturating} that \(\tilde{\mathcal{J}}\) is a saturating set. Therefore, by Theorem~\ref{thm_more_general}, the assumptions \eqref{assumptn_a_1} and \eqref{assumptn_a_2} hold for the subspace \(\mathcal{H} = \mathcal{H}_1^{\tilde{\mathcal{J}}}\). Consequently, the conclusions of Theorem~\ref{thm_intermediate} and Corollary~\ref{corollary_approx_in_Hs} apply.
	
To conclude, we compute \(\mathcal{H}_1^{\tilde{\mathcal{J}}}\). Since \(\mathcal{H}_0^{\tilde{\mathcal{J}}} = \mathrm{span}_{\mathbb{R}} \{\sin(x), \cos(x)\}\), we apply the bilinear operator \(\mathcal{Q}\) and use standard trigonometric identities:
\[
\begin{aligned}
\mathcal{Q}(\sin(x), \cos(x)) &= \cos(2x), \\
\mathcal{Q}(\sin(x), \sin(x)) &= \sin(2x), \\
\mathcal{Q}(\cos(x), \cos(x)) &= -\sin(2x).
\end{aligned}
\]
Hence, by the recursive construction \eqref{H_k_finite_subspace}, we obtain
\[
\mathcal{H}_1^{\tilde{\mathcal{J}}} = \mathrm{span}_{\mathbb{R}} \{ \sin(x), \cos(x), \sin(2x), \cos(2x) \}.
\]
This completes the proof.
\end{proof}

\subsection{Verification of assumption \eqref{assumptn_a_1} for $\mathcal{H}_1^{\mathcal{J}}$}

Let $\{\vartheta_l^s, \vartheta_l^c\}_{l \in \mathcal{J}}$ be a finite collection of functions in $W^{1,2}_0((0,T); \mathbb{R})$, and define the function
\begin{align} \label{expression_w}
	w(t,x) := \sum_{l \in \mathcal{J}} \big( \vartheta_l^s(t) \sin(lx) + \vartheta_l^c(t) \cos(lx) \big).
\end{align}
We set $\xi := \partial_t w + \mathcal{B}(w)$ and claim that this choice of $w$ and $\xi$ satisfies the conditions in assumption \eqref{assumptn_a_1} for the space $\mathcal{H}_1^{\mathcal{J}}$.

First, since $\vartheta_l^s(0) = \vartheta_l^s(T) = \vartheta_l^c(0) = \vartheta_l^c(T) = 0$, we immediately obtain the boundary condition \eqref{w_zero_both_end}. Moreover, since each $\vartheta_l^s, \vartheta_l^c \in W^{1,2}_0((0,T))$, we have $w \in H^1_0((0,T); \mathcal{H}_0^{\mathcal{J}})$. Also, as the space $\mathcal{H}_0^{\mathcal{J}}$ is invariant under the operators $\mathcal{L}_j$ for all $j \in \mathbb{N}^*$, it follows that
\[
\mathcal{L}_j w(t) \in \mathcal{H}_0^{\mathcal{J}} \subset \mathcal{H}_1^{\mathcal{J}} \quad \text{for all } t \in [0,T], \; j \in \mathbb{N}^*,
\]
so condition \eqref{L_j_w_in_H} holds.

To check that $\xi \in L^2((0,T); \mathcal{H}_1^{\mathcal{J}})$, we observe that $\partial_t w \in L^2((0,T); \mathcal{H}_1^{\mathcal{J}})$ by construction. Now consider the nonlinear term $\mathcal{B}(w)$:
\begin{align*}
	\mathcal{B}(w) &= w \cdot w_x \\
	&= \sum_{l \in \mathcal{J}} \left( \vartheta_l^s(t) \sin(lx) + \vartheta_l^c(t) \cos(lx) \right) \cdot \partial_x \left( \vartheta_l^s(t) \sin(lx) + \vartheta_l^c(t) \cos(lx) \right) \\
	&= \sum_{l \in \mathcal{J}} \mathcal{Q} \big( \vartheta_l^s(t) \sin(lx), \vartheta_l^c(t) \cos(lx) \big) + \frac{1}{2} \left[ \mathcal{Q} \big( \vartheta_l^s(t) \sin(lx), \vartheta_l^s(t) \sin(lx) \big) + \mathcal{Q} \big( \vartheta_l^c(t) \cos(lx), \vartheta_l^c(t) \cos(lx) \big) \right],
\end{align*}
where bilinear operator $\mathcal{Q}$ as \eqref{defn_Q_B(v,w)}. Each summand remains in $\mathcal{H}_1^{\mathcal{J}}$ due to the definition of the finite-dimensional subspace \eqref{H_k_finite_subspace}, and the finite nature of the sum. Thus, we conclude
\[
\mathcal{B}(w) \in C([0,T]; \mathcal{H}_1^{\mathcal{J}}),
\]
and hence $\xi \in L^2((0,T); \mathcal{H}_1^{\mathcal{J}})$. Therefore, assumption \eqref{assumptn_a_1} is verified for the subspace $\mathcal{H}_1^{\mathcal{J}}$.

\subsection{Verification of assumption \eqref{assumptn_a_2} for $\mathcal{H}_1^{\mathcal{J}}$}
In order to establish the approximate controllability of the linearized equation \eqref{linearized_Burger}, we deviate from the classical approach based on the adjoint system and unique continuation principles (UCP). Although the equation under consideration is linear, the presence of time-dependent coefficients renders the adjoint problem analytically intricate and technically challenging. Instead, we analyze the equation satisfied by the derivative of the resolving operator with respect to the initial data, this strategy based on the notion of observable families of functions, defined in Definition~\ref{defn_Observable_family}.

This approach avoids the technicalities of Carleman estimates or microlocal analysis typically used in UCP-based arguments. The proof strategy is inspired by the work of Nersesyan \cite{Nersesyan_2021_Sicon}, where approximate controllability of the 3D Navier–Stokes system is established using a similar observable family framework.

\noindent\textit{Step 1. Modification of the trajectory $w$:} We now modify the trajectory \( w \) by selecting an observable family \( \{\varphi_l^s, \varphi_l^c\}_{l \in \mathcal{J}} \), whose existence is guaranteed by Example 1. Define the functions  
\[
\vartheta_l^s(t) = \varTheta(t) \int_0^t \varphi_l^s(\rho) \, d\rho, \quad \vartheta_l^c(t) = \varTheta(t) \int_0^t \varphi_l^c(\rho) \, d\rho, \quad t \in [0, T],
\]
where \( \varTheta : [0, T] \to \mathbb{R} \) is a \( C^1 \)-function satisfying \( \varTheta(t) = 0 \) if and only if \( t = T \). Naturally, the assumption \eqref{assumptn_a_1} continues to hold under this modification.

\noindent\textit{Step 2. Reduction of control problem:} Following Step 2, we consider the linearized system around the modified trajectory \( w \), and denote by  
\[
\mathbf{R}(t, \delta) : H^s_0 \to H^s_0, \quad 0 \leq \delta \leq t \leq T,
\]  
the associated two-parameter resolving operator for the problem  
\begin{align}\label{linearized_burger_start_at_delta}
	\dot{v} + \mathcal{Q}(w, v) = 0, \quad v(\delta) = v_0.
\end{align}  
Since the equation is linear with time-dependent coefficients, the introduction of a two-parameter evolution operator is natural. This framework is motivated by the approach in Chapter 1 of the book Control and Nonlinearity by Coron \cite{Coron_book_2007}, where time-dependent control systems in finite dimensions are studied using a similar formulation. We now introduce the operator by the Duhamel's principle
\[
\mathcal{A} : L^2((0, T); H^s_0(\mathbb{T})) \to H^s_0(\mathbb{T}), \quad g \mapsto \int_0^T \mathbf{R}(T, \delta)\, g(\delta) \, d\delta,
\]
which represents the resolving operator for the system \eqref{linearized_Burger} with initial condition \( v(0) = 0 \). Let \( \mathcal{P}_{\mathcal{H}_1^{\mathcal{J}}} : H^s_0(\mathbb{T}) \to H^s_0(\mathbb{T}) \) denote the orthogonal projection onto the subspace \( \mathcal{H}_1^{\mathcal{J}} \subset H^s_0(\mathbb{T}) \). We then define the operator  
\[
\mathcal{A}_1 := \mathcal{A} \mathcal{P}_{\mathcal{H}_1^{\mathcal{J}}} : L^2((0, T); H^s_0(\mathbb{T})) \to H^s_0(\mathbb{T}),
\]
and aim to show that \( \operatorname{Im}(\mathcal{A}_1) \) is dense in \( H^s_0(\mathbb{T}) \). Note that \( \mathcal{A}_1 \equiv \mathcal{R}^{b,l} \), as defined in the proof of Theorem \ref{thm_intermediate}.

To establish the density of \( \operatorname{Im}(\mathcal{A}_1) \), it is sufficient to prove that the kernel of the adjoint operator \( \mathcal{A}_1^* \) is trivial. The adjoint \( \mathcal{A}_1^* \) is given by  
\[
\mathcal{A}_1^* : H^s_0(\mathbb{T}) \to L^2((0, T); H^s_0(\mathbb{T})), \quad z \mapsto \mathcal{P}_{\mathcal{H}_1^{\mathcal{J}}} \mathbf{R}(T, \delta)^* z,
\]
where \( \mathbf{R}(T, \delta)^* : H^s_0(\mathbb{T}) \to H^s_0(\mathbb{T}) \) denotes the adjoint of \( \mathbf{R}(T, \delta) \) with respect to the \( H^s \)-inner product.

\noindent\textit{Step 3. Triviality of $\operatorname{Ker} \mathcal{A}_1^*$ :} Let us consider an element \( z \in \ker \mathcal{A}_1^* \). Our objective is to demonstrate that \( z = 0 \). 

By definition of the kernel, for any \( g \in \mathcal{H}_1^{\mathcal{J}} \), we have
\[
\left\langle g, \mathbf{R}(T, \delta)^* z \right\rangle_s = 0, \quad \text{for almost every } \delta \in [0, T].
\]
Due to the continuity in \( \delta \) of the map \( \delta \mapsto \mathbf{R}(T, \delta)g \) in the topology of \( H^s \), we conclude that the pairing vanishes identically:
\begin{equation} \label{eq:innerproduct_zero_all_delta}
	\left\langle \mathbf{R}(T, \delta)g, z \right\rangle_s = 0, \quad \text{for all } \delta \in [0, T].
\end{equation}

Fix some intermediate time \( T_1 \in (0, T) \). Exploiting the semigroup (or flow) property of the resolvent operator, we may write:
\[
\mathbf{R}(T, \delta) = \mathbf{R}(T, T_1)\mathbf{R}(T_1, \delta), \quad \text{for } \delta \in [0, T_1].
\]
Substituting into \eqref{eq:innerproduct_zero_all_delta}, we obtain:
\[
\left\langle \mathbf{R}(T, T_1)\mathbf{R}(T_1, \delta)g, z \right\rangle_s = 0.
\]
Rewriting this using adjoint properties, define \( z_1 := \mathbf{R}(T, T_1)^* z \), so that:
\begin{equation} \label{eq:T1_delta_innerproduct}
	\left\langle \mathbf{R}(T_1, \delta)g, z_1 \right\rangle_s = 0, \quad \text{for all } \delta \in [0, T_1].
\end{equation}

In particular, taking \( \delta = T_1 \), we find:
\[
\left\langle g, z_1 \right\rangle_s = 0,
\]
which implies:
\begin{equation} \label{eq:z1_orthogonal_H1}
	z_1 \perp \mathcal{H}_1^{\mathcal{J}} \quad \text{in } H^s.
\end{equation}
Our next objective is to establish that \( z_1 \) is orthogonal to the space \( \mathcal{H}_2^{\mathcal{J}} \). To proceed, we introduce the following notation:
\begin{equation} \label{eq:def_y}
	y(t, \delta) := \mathbf{R}(\delta + t, \delta)g,
\end{equation}
where \( y(t, \delta) \) denotes the solution at time \( \delta + t \) of the linearized evolution equation, starting from initial data \( g \) at time \( \delta \). Then \( y \) satisfies the initial value problem:
\begin{align}
	\partial_t y(t, \delta) + \mathcal{Q}(w(\delta + t), y(t, \delta)) &= 0, \quad t \in (0, T - \delta), \label{eq:y_evolution}\\
	y(0, \delta) &= g. \label{eq:y_initial}
\end{align}
To analyze the regularity with respect to the initial time \( \delta \), we differentiate \( y(t, \delta) \) with respect to \( \delta \). Define:
\[
Y(t, \delta) := \partial_\delta y(t, \delta),
\]
which satisfies the following nonhomogeneous linear PDE, obtained by differentiating \eqref{eq:y_evolution}:
\begin{align}
\partial_t Y(t, \delta) + \mathcal{Q}(w(\delta + t), Y(t, \delta)) + \mathcal{Q}(\dot{w}(\delta + t), y(t, \delta)) &= 0, \quad t \in (0, T - \delta), \label{eq:Y_evolution}\\
Y(0, \delta) &= 0. \label{eq:Y_initial}
\end{align}
On the other hand, differentiating \eqref{eq:def_y} with respect to \( \delta \), and evaluating at \( t = T_1 - \delta \), we compute:
\begin{align}
\partial_\delta \mathbf{R}(T_1, \delta)g &= \frac{d}{d\delta} \left[\mathbf{R}(\delta + t, \delta)g\right] \bigg|_{t = T_1 - \delta} \notag\\
&= \partial_1 \mathbf{R}(T_1, \delta)g + \partial_2 \mathbf{R}(T_1, \delta)g \notag\\
&= -\mathcal{Q}(w(T_1), \mathbf{R}(T_1, \delta)g) + Y(T_1 - \delta, \delta), \label{derivative_R_delta}
\end{align}
where the first term we have used the identity from the linearized operator \eqref{linearized_burger_start_at_delta}:
\[
\partial_t \mathbf{R}(T_1, \delta)g = -\mathcal{Q}(w(T_1), \mathbf{R}(T_1, \delta)g),
\]
and the second from the definition of \( Y \). Differentiating the identity \eqref{eq:T1_delta_innerproduct} with respect to \( \delta \), we obtain:
\begin{equation}\label{eq:delta_inner_product_derivative}
	\frac{d}{d\delta} \left\langle \mathbf{R}(T_1, \delta)g, z_1 \right\rangle_s = 0.
\end{equation}

Since \( z_1 \) is independent of \( \delta \), applying the product rule yields:
\[
\left\langle \frac{d}{d\delta} \mathbf{R}(T_1, \delta)g, z_1 \right\rangle_s = 0.
\]

Using the identity \eqref{derivative_R_delta}, we substitute the expression for the derivative of \( \mathbf{R}(T_1, \delta)g \):
\[
\frac{d}{d\delta} \mathbf{R}(T_1, \delta)g = Y(T_1 - \delta, \delta) - \mathcal{Q}(w(T_1), \mathbf{R}(T_1, \delta)g),
\]
where \( Y \) satisfies the linearized inhomogeneous evolution equation \eqref{eq:Y_evolution}–\eqref{eq:Y_initial}. Substituting into \eqref{eq:delta_inner_product_derivative}, we obtain:
\begin{equation}\label{eq:derivative_identity}
	\left\langle Y(T_1 - \delta, \delta), z_1 \right\rangle_s = \left\langle \mathcal{Q}(w(T_1), \mathbf{R}(T_1, \delta)g), z_1 \right\rangle_s.
\end{equation}

To express \( Y(T_1 - \delta, \delta) \) in terms of its time evolution, we integrate equation \eqref{eq:Y_evolution} over the interval \( [0, T_1 - \delta] \). Using the initial condition \( Y(0, \delta) = 0 \), we obtain:
\begin{align}
	Y(T_1 - \delta, \delta) &= \int_0^{T_1 - \delta} \partial_t Y(t, \delta) \, dt \notag\\
	&= - \int_0^{T_1 - \delta} \Big( \mathcal{Q}(w(\delta + t), Y(t, \delta)) + \mathcal{Q}(\dot{w}(\delta + t), y(t, \delta)) \Big) \, dt. \label{eq:Y_integrated}
\end{align}

Substituting \eqref{eq:Y_integrated} into \eqref{eq:derivative_identity}, and recalling from \eqref{eq:def_y} that \( y(t, \delta) = \mathbf{R}(\delta + t, \delta)g \), we obtain the following integral identity:
\begin{align}
\int_0^{T_1 - \delta} \left\langle \mathcal{Q}(w(\delta + t), Y(t, \delta)), z_1 \right\rangle_s \, dt  + \int_0^{T_1 - \delta} \left\langle \mathcal{Q}(\dot{w}(\delta + t), \mathbf{R}(\delta + t, \delta)g), z_1 \right\rangle_s \, dt + \left\langle \mathcal{Q}(w(T_1), \mathbf{R}(T_1, \delta)g), z_1 \right\rangle_s = 0. \label{eq:final_integral_identity}
\end{align}

Changing variables in the second integral via \( t' = \delta + t \), we rewrite it as:
\[
\int_\delta^{T_1} \left\langle \mathcal{Q}(\dot{w}(t'), \mathbf{R}(t', \delta)g), z_1 \right\rangle_s \, dt'.
\]

So equation \eqref{eq:final_integral_identity} becomes:
\begin{align}
\int_0^{T_1 - \delta} \left\langle \mathcal{Q}(w(\delta + t), Y(t, \delta)), z_1 \right\rangle_s \, dt + \int_\delta^{T_1} \left\langle \mathcal{Q}(\dot{w}(t), \mathbf{R}(t, \delta)g), z_1 \right\rangle_s \, dt
+ \left\langle \mathcal{Q}(w(T_1), \mathbf{R}(T_1, \delta)g), z_1 \right\rangle_s = 0. \label{eq:integral_identity_explicit}
\end{align}
Differentiating the integral identity \eqref{eq:integral_identity_explicit} with respect to \( \delta \), we obtain the following pointwise-in-\( \delta \) equality:
\begin{equation}\label{eq:delta_differentiated_form}
	b(\delta) + \sum_{l \in \mathcal{J}} \left( a_l^s(\delta) \varphi_l^s(\delta) + a_l^c(\delta) \varphi_l^c(\delta) \right) = 0, \quad \text{for all } \delta \in [0, T_1],
\end{equation}
where the scalar-valued coefficient functions and residual term are given explicitly by:
\begin{align*}
b(\delta) &= \frac{d}{d\delta} \int_{0}^{T_1 - \delta} \left\langle \mathcal{Q}(w(\delta + t), Y(t, \delta)), z_1 \right\rangle_s \, dt  + \left\langle \mathcal{Q}( \dot{\varTheta}(\delta) \tilde{w}(\delta), g), z_1 \right\rangle_s \\
&\quad \hspace{2cm} + \int_{\delta}^{T_1 - \delta} \left\langle \mathcal{Q}(\dot{w}(t), \partial_\delta \mathbf{R}(t, \delta)g), z_1 \right\rangle_s \, dt + \frac{d}{d\delta} \left\langle \mathcal{Q}(w(T_1), \mathbf{R}(T_1, \delta)g), z_1 \right\rangle_s, \\
a_l^s(\delta) &= \varTheta(\delta) \left\langle \mathcal{Q}(\sin(lx), g), z_1 \right\rangle_s, \\
a_l^c(\delta) &= \varTheta(\delta) \left\langle \mathcal{Q}(\cos(lx), g), z_1 \right\rangle_s, \\
\tilde{w}(\delta) &= \sum_{l \in \mathcal{J}} \int_0^T \left( \varphi_l^s(t) \sin(lx) + \varphi_l^c(t) \cos(lx) \right) \, dt.
\end{align*}

Here, \( b(\delta) \) is a continuous function on \( [0, T_1] \), and each \( a_l^{s,c}(\delta) \in C^1([0, T_1]) \), owing to the regularity of the flow and the smoothness of the involved mappings.

Now, by the observability property of the time-dependent coefficient functions \( \{ \varphi_l^s, \varphi_l^c \}_{l \in \mathcal{J}} \), the identity \eqref{eq:delta_differentiated_form} implies that:
\[
a_l^s(\delta) \equiv 0, \quad a_l^c(\delta) \equiv 0 \quad \text{on } [0, T_1], \quad \text{for all } l \in \mathcal{J},
\]
provided that \( \varTheta(\delta) \neq 0 \) on \( [0, T_1] \), which is assumed. As a result, we conclude:
\[
\left\langle \mathcal{Q}(\sin(lx), g), z_1 \right\rangle_s = \left\langle \mathcal{Q}(\cos(lx), g), z_1 \right\rangle_s = 0,
\]
for all \( l \in \mathcal{J} \) and any \( g \in \mathcal{H}_2^{\mathcal{J}} \).
Combined with the orthogonality condition \eqref{eq:z1_orthogonal_H1}, we deduce that \( z_1 \) is orthogonal to the entire subspace: $\mathcal{H}_2^{\mathcal{J}}.$
Recalling the definition \( z_1 = \mathbf{R}(T, T_1)^* z \), we may rewrite the resulting orthogonality in terms of the adjoint map:
\[
\left\langle \mathbf{R}(T, T_1)g, z \right\rangle_s = 0, \quad \text{for all } g \in \mathcal{H}_2^{\mathcal{J}}, \text{ and } T_1 \in (0, T).
\]

Renaming \( T_1 \) as \( \delta \), this yields:
\[
\left\langle \mathbf{R}(T, \delta)g, z \right\rangle_s = 0, \quad \forall g \in \mathcal{H}_2^{\mathcal{J}},\ \delta \in [0, T],
\]
which extends the orthogonality condition \eqref{eq:innerproduct_zero_all_delta} to \( \mathcal{H}_2^{\mathcal{J}} \).

By iterating this argument inductively for \( \mathcal{H}_k^{\mathcal{J}} \) with increasing \( k \), we extend the conclusion:
\[
\left\langle \mathbf{R}(T, \delta)g, z \right\rangle_s = 0, \quad \forall g \in \bigcup_{k=0}^\infty \mathcal{H}_k^{\mathcal{J}},\ \delta \in [0, T].
\]

Finally, taking \( \delta = T \) and applying the saturation hypothesis of \( \mathcal{J} \), we obtain the density of the union \( \bigcup_k \mathcal{H}_k^{\mathcal{J}} \) in the ambient space. Thus,
\[
\left\langle g, z \right\rangle_s = 0, \quad \forall g \in H^s \quad \Rightarrow \quad z = 0,
\]
completing the proof of the key assumption \eqref{assumptn_a_2}.

\section{Saturating property}
To establish Lemma \ref{lemma_saturating}, we begin by assuming that \( \mathcal{J} \subset \mathbb{Z}^* \) is a symmetric generating set. That is, \( \mathcal{J} = -\mathcal{J} \), and the additive group generated by \( \mathcal{J} \) is \( \mathbb{Z} \). Define a sequence of symmetric subsets \( \{ \mathcal{J}_k \}_{k \geq 0} \subset \mathbb{Z} \) recursively by:
\begin{align*}
	& \mathcal{J}_0 := \mathcal{J}, \\
	& \mathcal{J}_{k+1} := \{ i + j \mid i \in \mathcal{J}_k,\, j \in \mathcal{J} \}, \quad \text{for } k \geq 0.
\end{align*}

Since \( \mathcal{J} \) is a generator of \( \mathbb{Z} \), for any integer \( z \in \mathbb{Z} \), there exist elements \( a_1, \dots, a_m \in \mathcal{J} \) and coefficients \( \lambda_1, \dots, \lambda_m \in \mathbb{Z} \) such that:
\[
z = \sum_{i=1}^{m} \lambda_i a_i.
\]
Due to the symmetry of \( \mathcal{J} \), each term \( \lambda_i a_i \) can be expressed as a sum of \( |\lambda_i| \) elements in \( \mathcal{J} \), either all equal to \( a_i \) (if \( \lambda_i > 0 \)) or all equal to \( -a_i \) (if \( \lambda_i < 0 \)).

Thus, the integer \( z \) can be expressed as the sum of \( \sum_{i=1}^m |\lambda_i| \) elements in \( \mathcal{J} \). By construction, this implies:
\[
z \in \mathcal{J}_k \quad \text{for some } k = \left( \sum_{i=1}^m |\lambda_i| \right) - 1.
\]

Hence, every integer in \( \mathbb{Z} \) is eventually included in the union of the sets \( \mathcal{J}_k \), and we conclude:
\begin{align}\label{union_is_Z}
	\bigcup_{k \geq 0} \mathcal{J}_k = \mathbb{Z}.
\end{align}
To prove Lemma~\ref{lemma_saturating}, it suffices to verify the following inductive inclusions for all integers \( i \geq 0 \) and \( k \geq 1 \):
\begin{equation}\label{induction_H_and_J}
	\mathcal{H}_i^{\mathcal{J}_k} \subset \mathcal{H}_{i+1}^{\mathcal{J}_{k-1}},
\end{equation}
where the finite-dimensional spaces \( \mathcal{H}_i^{\mathcal{J}_k} \) are defined as in~\eqref{H_0_finite_subspace} and~\eqref{H_k_finite_subspace}, with the notation \( \mathcal{J} = \mathcal{J}_k \).

Assuming the inclusions~\eqref{induction_H_and_J} hold, iterating them yields the nested chain
\[
\mathcal{H}_0^{\mathcal{J}_k} \subset \mathcal{H}_1^{\mathcal{J}_{k-1}} \subset \mathcal{H}_2^{\mathcal{J}_{k-2}} \subset \cdots \subset \mathcal{H}_{k}^{\mathcal{J}_0}.
\]
Now, since the frequency sets satisfy
\[
\bigcup_{k = 0}^\infty \mathcal{J}_k = \mathbb{Z},
\]
by~\eqref{union_is_Z}, the associated union of finite-dimensional subspaces obeys
\[
\bigcup_{k=0}^\infty \mathcal{H}_k^{\mathcal{J}} \quad \text{is dense in } H^s_0(\mathbb{T}),
\]
and thus the frequency set \( \mathcal{J} \) is saturating.

We now prove the inclusion~\eqref{induction_H_and_J} by induction on \( i \), for a fixed \( k \geq 1 \).

\paragraph{Base case: \( i = 0 \).} We show that
\[
\mathcal{H}_0^{\mathcal{J}_k} \subset \mathcal{H}_1^{\mathcal{J}_{k-1}}.
\]
By definition,
\[
\mathcal{H}_0^{\mathcal{J}_k} = \operatorname{span}_\mathbb{R} \left\{ \cos(lx),\ \sin(lx) \mid l \in \mathcal{J}_k \right\}.
\]
The recursive structure of the frequency sets yields
\[
\mathcal{J}_k = \mathcal{J}_{k-1} + \mathcal{J} = \left\{ l_1 + l_2 \mid l_1 \in \mathcal{J}_{k-1},\ l_2 \in \mathcal{J} \right\}.
\]
We use the bilinear operator \( \mathcal{Q} \) and trigonometric identities to generate the desired modes. Specifically, for \( \alpha, \beta \in \mathbb{R} \), we have:
\begin{align}
	2\mathcal{Q}(\cos(\alpha x), \cos(\beta x)) &= -(\alpha + \beta)\sin((\alpha + \beta)x) - (\beta - \alpha)\sin((\beta - \alpha)x), \label{Q_cos_cos} \\
	2\mathcal{Q}(\sin(\alpha x), \cos(\beta x)) &= (\alpha - \beta)\cos((\alpha - \beta)x) + (\alpha + \beta)\cos((\alpha + \beta)x). \label{Q_sin_cos}
\end{align}
Since each \( \mathcal{J}_K \) is symmetric (i.e., \( \mathcal{J}_K = -\mathcal{J}_K \)), by selecting appropriate \( \alpha, \beta \in \mathcal{J}_{k-1} \), the modes \( \sin((l_1 + l_2)x) \), \( \cos((l_1 + l_2)x) \) can be produced via \( \mathcal{Q} \), hence lie in \( \mathcal{H}_1^{\mathcal{J}_{k-1}} \). Therefore,
\[
\mathcal{H}_0^{\mathcal{J}_k} \subset \mathcal{H}_1^{\mathcal{J}_{k-1}}.
\]

\paragraph{Inductive step.} Assume that for some \( i \geq 0 \),
\[
\mathcal{H}_i^{\mathcal{J}_k} \subset \mathcal{H}_{i+1}^{\mathcal{J}_{k-1}}.
\]
We aim to prove that
\[
\mathcal{H}_{i+1}^{\mathcal{J}_k} \subset \mathcal{H}_{i+2}^{\mathcal{J}_{k-1}}.
\]
By the recursive definition of the hierarchy,
\[
\mathcal{H}_{i+1}^{\mathcal{J}_k} = \operatorname{span}_\mathbb{R} \left\{ \psi_1 + \mathcal{Q}(\psi_2, \phi) \mid \psi_1, \psi_2 \in \mathcal{H}_i^{\mathcal{J}_k},\ \phi \in \mathcal{H}_0^{\mathcal{J}_k} \right\}.
\]
By the inductive hypothesis, \( \psi_1, \psi_2 \in \mathcal{H}_{i+1}^{\mathcal{J}_{k-1}} \), and by the base case just proved,
\[
\phi \in \mathcal{H}_0^{\mathcal{J}_k} \subset \mathcal{H}_1^{\mathcal{J}_{k-1}}.
\]
Thus, \( \mathcal{Q}(\psi_2, \phi) \in \mathcal{H}_{i+2}^{\mathcal{J}_{k-1}} \), and so
\[
\psi_1 + \mathcal{Q}(\psi_2, \phi) \in \mathcal{H}_{i+2}^{\mathcal{J}_{k-1}}.
\]

To illustrate explicitly, consider a particular case \( \psi_2 = \sin(\gamma x) \), and take
\[
\phi = \mathcal{Q}(\sin(\alpha x), \sin(\beta x)),
\]
with \( \sin(\alpha x), \sin(\beta x) \in \mathcal{H}_0^{\mathcal{J}_k} \). Then:
\begin{align*}
	\mathcal{Q}(\sin(\gamma x), \phi) 
	= \mathcal{Q}\left(\sin(\gamma x), \mathcal{Q}(\sin(\alpha x), \sin(\beta x))\right)
	= \frac{1}{2} \Big[ &(A + C)\sin((\gamma + \alpha + \beta)x) + (A - C)\sin((\gamma - \alpha - \beta)x) \\
	&+ (B + C)\sin((\gamma + \alpha - \beta)x) + (B - C)\sin((\gamma - \alpha + \beta)x) \Big],
\end{align*}
where \( A, B, C \) are real coefficients depending on \( \alpha, \beta, \gamma \). The frequencies appearing on the right-hand side belong to \( \mathcal{J}_{k-1} \), and the regularity level is \( i+2 \). Thus:
\[
\mathcal{Q}(\psi_2, \phi) \in \mathcal{H}_{i+2}^{\mathcal{J}_{k-1}}.
\]
This completes the inductive step.

\medskip

Hence, the inductive chain~\eqref{induction_H_and_J} holds for all \( i \geq 0 \), and the lemma \ref{lemma_saturating} follows.

\section{Appendix : Proof of Proposition \ref{prp_approx_right_inverse}}
To proceed, we first state a key lemma, which serves as the foundation for the proof of Proposition \ref{prp_approx_right_inverse}. The detailed proof of this lemma can be found in Section 2 of \cite{Kuksin_Nersesyan_Shirikyan_2020}.

\begin{lemma}\label{lemma_G}
Let \( G : H \to H \) be a non-negative self-adjoint operator on a Hilbert space \( H \). Then the map
\[
(0, \infty) \ni \gamma \mapsto (G + \gamma I)^{-1}
\]
defines a smooth family of bounded operators. For each \( h \in H \), define
\[
\Delta_h(\gamma) := \| G (G + \gamma I)^{-1} h - h \|^2.
\]
Then \( \Delta_h(\gamma) \) is a decreasing function of \( \gamma \). Moreover, the operators \( G(G + \gamma I)^{-1} \) and \( (G + \gamma I)^{-1} \) satisfy the norm bounds
\[
\| G(G + \gamma I)^{-1} \| \leq 1, \quad \text{and} \quad \| (G + \gamma I)^{-1} \| \leq \gamma^{-1}.
\]
If \( G \) has dense range, then
\begin{align}\label{limit_G_gamma}
\lim_{\gamma \to 0} \sqrt{\Delta_h(\gamma)} = 0, \quad \text{for all } h \in H.
\end{align}
\end{lemma}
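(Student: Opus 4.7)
The natural framework is the spectral theorem for non-negative self-adjoint operators, which furnishes a projection-valued measure $E$ supported on $\sigma(G) \subset [0,\infty)$ with $G = \int \lambda \, dE_\lambda$ and $f(G) = \int f(\lambda) \, dE_\lambda$ for every bounded Borel $f$. The plan is to pull each of the three assertions of the lemma back to a scalar integral against the positive Borel measure $\mu_h(\cdot) := \langle E_\cdot h, h \rangle$, using the isometry $\|f(G)h\|^2 = \int |f(\lambda)|^2 \, d\mu_h(\lambda)$ and $\mu_h([0,\infty)) = \|h\|^2$.

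For smoothness of $\gamma \mapsto R(\gamma) := (G + \gamma I)^{-1}$, I will invoke the resolvent identity
$$R(\gamma_1) - R(\gamma_2) = (\gamma_2 - \gamma_1)\, R(\gamma_1) R(\gamma_2),$$
which yields $R'(\gamma) = -R(\gamma)^2$ in operator norm and, by induction, $R^{(k)}(\gamma) = (-1)^k k!\, R(\gamma)^{k+1}$; hence $\gamma \mapsto R(\gamma)$ is $C^\infty$ on $(0,\infty)$. The two norm bounds are immediate consequences of functional calculus, since $\lambda/(\lambda + \gamma) \in [0,1]$ and $1/(\lambda + \gamma) \in [0, \gamma^{-1}]$ for all $\lambda \geq 0$, giving $\|GR(\gamma)\| \leq 1$ and $\|R(\gamma)\| \leq \gamma^{-1}$.

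For the monotonicity claim, I will write
$$\Delta_h(\gamma) = \int_{[0,\infty)} \left( \frac{\lambda}{\lambda + \gamma} - 1 \right)^{\!2} d\mu_h(\lambda) = \int_{[0,\infty)} \frac{\gamma^2}{(\lambda + \gamma)^2} \, d\mu_h(\lambda),$$
and read off monotonicity from a pointwise-in-$\lambda$ computation: a single differentiation gives $\partial_\gamma \bigl(\gamma/(\lambda+\gamma)\bigr)^2 = 2\gamma\lambda/(\lambda+\gamma)^3$, which has constant sign in $\gamma > 0$ for each fixed $\lambda \geq 0$. Integrating this monotonicity of the integrand against $d\mu_h$ transfers it to $\Delta_h(\gamma)$.

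For the limit as $\gamma \to 0^+$, the integrand $\gamma^2/(\lambda + \gamma)^2$ is uniformly bounded by $1 \in L^1(\mu_h)$ and converges pointwise to the indicator $\mathbf{1}_{\{0\}}(\lambda)$. Dominated convergence then gives
$$\lim_{\gamma \to 0^+} \Delta_h(\gamma) = \mu_h(\{0\}) = \|P_{\ker G}\, h\|^2.$$
Because $G$ is self-adjoint, $\ker G = (\overline{\operatorname{ran} G})^{\perp}$; the density hypothesis on $\operatorname{ran} G$ therefore forces $P_{\ker G} = 0$, from which \eqref{limit_G_gamma} follows. There is no real analytic obstacle in the argument — the only delicate point is keeping the functional-calculus bookkeeping consistent across the three assertions, which is routine once the spectral representation is in place.
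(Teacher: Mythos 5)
Your proof is correct, and it is in substance the standard argument: the paper itself gives no proof of Lemma~\ref{lemma_G} --- it defers to Section~2 of \cite{Kuksin_Nersesyan_Shirikyan_2020} --- and the proof there runs along the same spectral-calculus lines you use (the resolvent identity for smoothness, the bounds $\lambda/(\lambda+\gamma)\in[0,1]$ and $(\lambda+\gamma)^{-1}\le\gamma^{-1}$ on $\sigma(G)\subset[0,\infty)$ for the two operator-norm estimates, and the representation $\Delta_h(\gamma)=\int_{[0,\infty)}\gamma^2(\lambda+\gamma)^{-2}\,d\mu_h(\lambda)$ combined with dominated convergence and $\ker G=(\overline{\operatorname{ran}G})^{\perp}$ for the limit \eqref{limit_G_gamma}). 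Your argument also works verbatim for unbounded non-negative self-adjoint $G$, though in the application $G=TT^{*}$ is bounded.

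One point you should state explicitly rather than hedge. Your pointwise computation $\partial_\gamma\bigl(\gamma/(\lambda+\gamma)\bigr)^{2}=2\gamma\lambda/(\lambda+\gamma)^{3}\ge 0$ shows the integrand is \emph{nondecreasing} in $\gamma$ for each fixed $\lambda\ge 0$, hence $\Delta_h$ is nondecreasing in $\gamma$ --- the opposite of the lemma's literal wording that $\Delta_h$ ``is a decreasing function of $\gamma$''. The direction you obtain is the correct one: for $G=I$ one has $\Delta_h(\gamma)=\gamma^{2}\|h\|^{2}/(1+\gamma)^{2}$, which increases in $\gamma$ (and must, since $\Delta_h(\gamma)\to 0$ as $\gamma\to 0^{+}$ while $\Delta_h(\gamma)\to\|h\|^{2}$ as $\gamma\to\infty$ when $\ker G=\{0\}$). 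Nondecreasing in $\gamma$ is moreover exactly the monotonicity used in the proof of Proposition~\ref{prp_approx_right_inverse}, where the bound $\|TT^{*}(G+\gamma I)^{-1}h-h\|\le\varepsilon/3$, once arranged at $\gamma=\gamma_\varepsilon(h)$, must persist for all $0<\gamma\le\gamma_\varepsilon(h)$. So the lemma's ``decreasing'' is a slip (it should be read as ``decreasing as $\gamma\to 0^{+}$''), and writing that the derivative ``has constant sign'' without naming the sign papers over this; name it and note the discrepancy. Everything else --- the induction $R^{(k)}(\gamma)=(-1)^{k}k!\,R(\gamma)^{k+1}$, the dominated-convergence step with dominating function $1\in L^{1}(\mu_h)$ and pointwise limit the indicator of $\{0\}$, and the identification $\mu_h(\{0\})=\|P_{\ker G}h\|^{2}$ --- is accurate.
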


\begin{proof}[Proof of Proposition \ref{prp_approx_right_inverse}]
	Let us define the operator \( G := T T^* : H \to H \). Since the image \( \operatorname{Im}(T) \) is dense in \( H \), the kernel of the self-adjoint operator \( G \) is trivial, and thus \( \operatorname{Im}(G) \) is also dense in \( H \).
	
	Fix \( \varepsilon > 0 \). By Lemma \ref{lemma_G}, for any \( h \in H \), there exists a constant \( \gamma_\varepsilon(h) > 0 \) such that
	\[
	\| T T^* (G + \gamma I)^{-1} h - h \| \le \frac{\varepsilon}{3}, \quad \text{for all } 0 < \gamma \le \gamma_\varepsilon(h).
	\]
	Moreover, since the operator \( T T^* (G + \gamma I)^{-1} \) has norm at most one, continuity implies that for each \( h \in H \), there exists \( r_\varepsilon(h) > 0 \) such that
	\begin{equation} \label{eq:approx_cont}
		\| T T^* (G + \gamma I)^{-1} g - g \| \le \frac{2\varepsilon}{3}, \quad \text{for all } g \in B_H(h, r_\varepsilon(h)), \ 0 < \gamma \le \gamma_\varepsilon(h).
	\end{equation}
	
	Since the unit ball \( B_V(1) \subset H \) is compact due to the compact embedding \( V \hookrightarrow H \), the open covering \( \{ B_H(h, r_\varepsilon(h)) \}_{h \in H} \) admits a finite subcover \( \{ B_H(h_j, r_\varepsilon(h_j)) \}_{j=1}^m \). Setting \( \gamma_\varepsilon := \min_{1 \le j \le m} \gamma_\varepsilon(h_j) \), it follows from \eqref{eq:approx_cont} that
	\[
	\| T T^* (G + \gamma_\varepsilon I)^{-1} h - h \| \le \frac{2\varepsilon}{3}, \quad \text{for all } h \in B_V(1).
	\]
	
	Next, define \( T_{\gamma_\varepsilon} := T^*(G + \gamma_\varepsilon I)^{-1} \). To approximate this operator by a finite-rank map, consider an orthonormal basis \( \{ f_i \} \) of \( F \), and let \( P_M \) denote the orthogonal projection onto the span of the first \( M \) basis elements. Define \( T_{\gamma_\varepsilon, M} := P_M T_{\gamma_\varepsilon} \).
	
	Since \( T T_{\gamma_\varepsilon, M} \to T T^* (G + \gamma_\varepsilon I)^{-1} \) strongly and uniformly on compact subsets as \( M \to \infty \), we may choose \( M_\varepsilon \ge 1 \) such that
	\[
	\| T T_{\gamma_\varepsilon, M_\varepsilon} h - h \| \le \varepsilon, \quad \text{for all } h \in B_V(1).
	\]
	By homogeneity, the same estimate extends to all of \( V \), yielding the desired result with
	\[
	T_\varepsilon := P_{M_\varepsilon} T^*(G + \gamma_\varepsilon I)^{-1}.
	\]
\end{proof}

\bibliographystyle{plain}
\bibliography{reference}

\begin{thebibliography}{10}

\bibitem{Agrachev_2005}
Andrey~A. Agrachev and Andrey~V. Sarychev.
\newblock Navier-{Stokes} equations: controllability by means of low modes
  forcing.
\newblock {\em J. Math. Fluid Mech.}, 7(1):108--152, 2005.

\bibitem{Agrachev_2006}
Andrey~A. Agrachev and Andrey~V. Sarychev.
\newblock Controllability of 2d {Euler} and {Navier}-{Stokes} equations by
  degenerate forcing.
\newblock {\em Commun. Math. Phys.}, 265(3):673--697, 2006.

\bibitem{Ahamed_Mondal_25}
Sakil Ahamed and Debanjit Mondal.
\newblock Global controllability of the kawahara equation at any time.
\newblock {\em Nonlinear Analysis: Real World Applications}, 85:104374, 2025.

\bibitem{Samaniego_2008}
Borys Alvarez-Samaniego and David Lannes.
\newblock Large time existence for 3d water waves and asymptotics.
\newblock {\em Invent. Math.}, 171(3):485--541, 2008.

\bibitem{Araruna_Capistrano_Doronin_2012}
F.~D. Araruna, R.~A. Capistrano-Filho, and G.~G. Doronin.
\newblock Energy decay for the modified {K}awahara equation posed in a bounded
  domain.
\newblock {\em J. Math. Anal. Appl.}, 385(2):743--756, 2012.

\bibitem{Bona_Chen_Saut_2002}
J.~L. Bona, M.~Chen, and J.-C. Saut.
\newblock Boussinesq equations and other systems for small-amplitude long waves
  in nonlinear dispersive media. {I}: {Derivation} and linear theory.
\newblock {\em J. Nonlinear Sci.}, 12(4):283--318, 2002.

\bibitem{Bona_Colin_David_2005}
Jerry~L. Bona, Thierry Colin, and David Lannes.
\newblock Long wave approximations for water waves.
\newblock {\em Arch. Ration. Mech. Anal.}, 178(3):373--410, 2005.

\bibitem{Bourgain_1997}
J.~Bourgain.
\newblock Fourier transform restriction phenomena for certain lattice subsets
  and applications to nonlinear evolution equations. {I}: {Schr{\"o}dinger}
  equations.
\newblock {\em Geom. Funct. Anal.}, 3(2):107--156, 1993.

\bibitem{Bourgain_1993}
J.~Bourgain.
\newblock Fourier transform restriction phenomena for certain lattice subsets
  and applications to nonlinear evolution equations. {II}: {The}
  {KdV}-equation.
\newblock {\em Geom. Funct. Anal.}, 3(3):209--262, 1993.

\bibitem{Boussinesq_1871}
Joseph Boussinesq.
\newblock Th{\'e}orie de l’intumescence liquide appel{\'e}e onde solitaire ou
  de translation se propageant dans un canal rectangulaire.
\newblock {\em CR Acad. Sci. Paris}, 72(755-759):1871, 1871.

\bibitem{Caicedo_Miguel_Capistrano_2017}
Miguel~Andres Caicedo, Roberto de~A. Capistrano~Filho, and Bing-Yu Zhang.
\newblock Neumann boundary controllability of the {K}orteweg--de {V}ries
  equation on a bounded domain.
\newblock {\em SIAM J. Control Optim.}, 55(6):3503--3532, 2017.

\bibitem{Capistrano_Roberto_2015}
Roberto~A. Capistrano-Filho, Ademir~F. Pazoto, and Lionel Rosier.
\newblock Internal controllability of the {K}orteweg--de {V}ries equation on a
  bounded domain.
\newblock {\em ESAIM Control Optim. Calc. Var.}, 21(4):1076--1107, 2015.

\bibitem{Capistrano_Roberto_Pazoto_2019}
Roberto~A. Capistrano-Filho, Ademir~F. Pazoto, and Lionel Rosier.
\newblock Control of a {B}oussinesq system of {K}d{V}-{K}d{V} type on a bounded
  interval.
\newblock {\em ESAIM Control Optim. Calc. Var.}, 25:Paper No. 58, 55, 2019.

\bibitem{Cerpa_2014}
Eduardo Cerpa.
\newblock Control of a {K}orteweg-de {V}ries equation: a tutorial.
\newblock {\em Math. Control Relat. Fields}, 4(1):45--99, 2014.

\bibitem{Chapouly_KdV_2009}
Marianne Chapouly.
\newblock Global controllability of a nonlinear {Korteweg}-de {Vries} equation.
\newblock {\em Commun. Contemp. Math.}, 11(3):495--521, 2009.

\bibitem{Chapouly_Burger_2009}
Marianne Chapouly.
\newblock Global controllability of nonviscous and viscous {Burgers}-type
  equations.
\newblock {\em SIAM J. Control Optim.}, 48(3):1567--1599, 2009.

\bibitem{Chen_2019}
Mo~Chen.
\newblock Internal controllability of the {K}awahara equation on a bounded
  domain.
\newblock {\em Nonlinear Anal.}, 185:356--373, 2019.

\bibitem{Chen23}
Mo~Chen.
\newblock Global approximate controllability of the {K}orteweg--de {V}ries
  equation by a finite-dimensional force.
\newblock {\em Appl. Math. Optim.}, 87(1):Paper No. 12, 22, 2023.

\bibitem{Shirshendu_Debanjit_2024}
Shirshendu Chowdhury, Rajib Dutta, and Debanjit Mondal.
\newblock Global approximate controllability of the camassa-holm equation by a
  finite dimensional force, 2024.

\bibitem{Coron_JMPA_1996}
J.-M. Coron.
\newblock On the controllability of 2-{D} incompressible perfect fluids.
\newblock {\em J. Math. Pures Appl. (9)}, 75(2):155--188, 1996.

\bibitem{Coron_return_stabilization_1992}
Jean-Michel Coron.
\newblock Global asymptotic stabilization for controllable systems without
  drift.
\newblock {\em Math. Control Signals Syst.}, 5(3):295--312, 1992.

\bibitem{Coron_ESIAM_1996}
Jean-Michel Coron.
\newblock On the controllability of the 2-{D} incompressible {Navier}-{Stokes}
  equations with the {Navier} slip boundary conditions.
\newblock {\em ESAIM, Control Optim. Calc. Var.}, 1:35--75, 1996.

\bibitem{Coron_book_2007}
Jean-Michel Coron.
\newblock {\em Control and nonlinearity}, volume 136 of {\em Mathematical
  Surveys and Monographs}.
\newblock American Mathematical Society, Providence, RI, 2007.

\bibitem{Coron_2004}
Jean-Michel Coron and Emmanuelle Cr\'{e}peau.
\newblock Exact boundary controllability of a nonlinear {K}d{V} equation with
  critical lengths.
\newblock {\em J. Eur. Math. Soc. (JEMS)}, 6(3):367--398, 2004.

\bibitem{Coron_Fursikov_1996}
Jean-Michel Coron and Andrei~V. Fursikov.
\newblock Global exact controllability of the 2d {Navier}-{Stokes} equations on
  a manifold without boundary.
\newblock {\em Russ. J. Math. Phys.}, 4(4):429--448, 1996.

\bibitem{Coron_Xiang_Zhang_2023}
Jean-Michel Coron, Shengquan Xiang, and Ping Zhang.
\newblock On the global approximate controllability in small time of
  semiclassical 1-{D} {S}chr\"odinger equations between two states with
  positive quantum densities.
\newblock {\em J. Differential Equations}, 345:1--44, 2023.

\bibitem{Roberto_Kwak_Leal_2022}
Roberto de~A.~Capistrano–Filho, Chulkwang Kwak, and Francisco~J. {Vielma
  Leal}.
\newblock On the control issues for higher-order nonlinear dispersive equations
  on the circle.
\newblock {\em Nonlinear Analysis: Real World Applications}, 68:103695, 2022.

\bibitem{Fursikov_Imanuvilov_1999}
A.~V. Fursikov and O.~Yu. Imanuvilov.
\newblock Exact controllability of the {Navier}-{Stokes} and {Boussinesq}
  equations.
\newblock {\em Russ. Math. Surv.}, 54(3):565--618, 1999.

\bibitem{Peng_Gao_2022}
Peng Gao.
\newblock Irreducibility of {K}uramoto-{S}ivashinsky equation driven by
  degenerate noise.
\newblock {\em ESAIM Control Optim. Calc. Var.}, 28:Paper No. 20, 22, 2022.

\bibitem{Glass_2000}
Olivier Glass.
\newblock Exact boundary controllability of 3-{D} {Euler} equation.
\newblock {\em ESAIM, Control Optim. Calc. Var.}, 5:1--44, 2000.

\bibitem{Glatt-Holtz_Herog_Mattingly_2018}
Nathan~E. Glatt-Holtz, David~P. Herzog, and Jonathan~C. Mattingly.
\newblock Scaling and saturation in infinite-dimensional control problems with
  applications to stochastic partial differential equations.
\newblock {\em Ann. PDE}, 4(2):Paper No. 16, 103, 2018.

\bibitem{Haq_Shams_Ayesha_2023}
Sirajul Haq, Shams~Ul Arifeen, and Ayesha Noreen.
\newblock An efficient computational technique for higher order kdv equation
  arising in shallow water waves.
\newblock {\em Applied Numerical Mathematics}, 189:53--65, 2023.

\bibitem{Hasimoto_1970}
H~Hasimoto.
\newblock Water waves.
\newblock {\em Kagaku}, 40:401--408, 1970[in Japanese].

\bibitem{Hunter_1988}
John~K. Hunter and J\"{u}rgen Scheurle.
\newblock Existence of perturbed solitary wave solutions to a model equation
  for water waves.
\newblock {\em Phys. D}, 32(2):253--268, 1988.

\bibitem{Ito_1980}
M.~Ito.
\newblock {An extension of nonlinear evolution equations of the KdV (mKdV) type
  to higher orders}.
\newblock {\em Journal of the Physical Society of Japan}, 49:771--778, 1980.

\bibitem{Jellouli_23}
Melek Jellouli.
\newblock On the controllability of the {BBM} equation.
\newblock {\em Math. Control Relat. Fields}, 13(1):415--430, 2023.

\bibitem{Kato_2012}
Takamori Kato.
\newblock Low regularity well-posedness for the periodic {Kawahara} equation.
\newblock {\em Differ. Integral Equ.}, 25(11-12):1011--1036, 2012.

\bibitem{Kenig_Gustavo_Luis_1991}
Carlos~E. Kenig, Gustavo Ponce, and Luis Vega.
\newblock Oscillatory integrals and regularity of dispersive equations.
\newblock {\em Indiana University Mathematics Journal}, 40(1):33--69, 1991.

\bibitem{Kenig_Ponce_gustavo_1994}
Carlos~E. Kenig, Gustavo Ponce, and Luis Vega.
\newblock Higher-order nonlinear dispersive equations.
\newblock {\em Proc. Am. Math. Soc.}, 122(1):157--166, 1994.

\bibitem{Kenig_Carlos_Ponce_1996}
Carlos~E. Kenig, Gustavo Ponce, and Luis Vega.
\newblock A bilinear estimate with applications to the {KdV} equation.
\newblock {\em J. Am. Math. Soc.}, 9(2):573--603, 1996.

\bibitem{KdV_1895}
D.~J. Korteweg and G.~De~Vries.
\newblock On the change of form of long waves advancing in a rectangular canal,
  and on a new type of long stationary waves.
\newblock {\em Phil. Mag. (5)}, 39:422--443, 1895.

\bibitem{Kuksin_Nersesyan_Shirikyan_2020}
Sergei Kuksin, Vahagn Nersesyan, and Armen Shirikyan.
\newblock Exponential mixing for a class of dissipative {PDEs} with bounded
  degenerate noise.
\newblock {\em Geom. Funct. Anal.}, 30(1):126--187, 2020.

\bibitem{Rosier_10}
Camille Laurent, Lionel Rosier, and Bing-Yu Zhang.
\newblock Control and stabilization of the {K}orteweg-de {V}ries equation on a
  periodic domain.
\newblock {\em Comm. Partial Differential Equations}, 35(4):707--744, 2010.

\bibitem{Narsesyan_21}
Vahagn Nersesyan.
\newblock Approximate controllability of nonlinear parabolic {PDE}s in
  arbitrary space dimension.
\newblock {\em Math. Control Relat. Fields}, 11(2):237--251, 2021.

\bibitem{Nersesyan_2021_Sicon}
Vahagn Nersesyan.
\newblock A proof of approximate controllability of the 3d {Navier}-{Stokes}
  system via a linear test.
\newblock {\em SIAM J. Control Optim.}, 59(4):2411--2427, 2021.

\bibitem{Nersisyan_2010}
Hayk Nersisyan.
\newblock Controllability of 3{D} incompressible {E}uler equations by a
  finite-dimensional external force.
\newblock {\em ESAIM Control Optim. Calc. Var.}, 16(3):677--694, 2010.

\bibitem{Nersisyan_2011}
Hayk Nersisyan.
\newblock Controllability of the 3{D} compressible {E}uler system.
\newblock {\em Comm. Partial Differential Equations}, 36(9):1544--1564, 2011.

\bibitem{Rosier_1997}
Lionel Rosier.
\newblock Exact boundary controllability for the {K}orteweg-de {V}ries equation
  on a bounded domain.
\newblock {\em ESAIM Control Optim. Calc. Var.}, 2:33--55, 1997.

\bibitem{Russell_Zhang_1993}
D.~L. Russell and Bing~Yu Zhang.
\newblock Controllability and stabilizability of the third-order linear
  dispersion equation on a periodic domain.
\newblock {\em SIAM J. Control Optim.}, 31(3):659--676, 1993.

\bibitem{Russell_Zang_1996}
David~L. Russell and Bing~Yu Zhang.
\newblock Exact controllability and stabilizability of the {K}orteweg-de
  {V}ries equation.
\newblock {\em Trans. Amer. Math. Soc.}, 348(9):3643--3672, 1996.

\bibitem{Sarychev_12}
Andrey Sarychev.
\newblock Controllability of the cubic {S}chroedinger equation via a
  low-dimensional source term.
\newblock {\em Math. Control Relat. Fields}, 2(3):247--270, 2012.

\bibitem{Sawada_Kotera_1974}
K.~{Sawada} and T.~{Kotera}.
\newblock {A Method for Finding N-Soliton Solutions of the K.d.V. Equation and
  K.d.V.-Like Equation}.
\newblock {\em Progress of Theoretical Physics}, 51(5):1355--1367, May 1974.

\bibitem{Shirikyan_06}
Armen Shirikyan.
\newblock Approximate controllability of three-dimensional {N}avier-{S}tokes
  equations.
\newblock {\em Comm. Math. Phys.}, 266(1):123--151, 2006.

\bibitem{Shirikyan_07}
Armen Shirikyan.
\newblock Contr\^{o}labilit\'{e} exacte en projections pour les \'{e}quations
  de {N}avier-{S}tokes tridimensionnelles.
\newblock {\em Ann. Inst. H. Poincar\'{e} C Anal. Non Lin\'{e}aire},
  24(4):521--537, 2007.

\bibitem{Shirikyan_14}
Armen Shirikyan.
\newblock Approximate controllability of the viscous {B}urgers equation on the
  real line.
\newblock In {\em Geometric control theory and sub-{R}iemannian geometry},
  volume~5 of {\em Springer INdAM Ser.}, pages 351--370. Springer, Cham, 2014.

\bibitem{Shirikyan_18}
Armen Shirikyan.
\newblock Control theory for the {B}urgers equation: {A}grachev-{S}arychev
  approach.
\newblock {\em Pure Appl. Funct. Anal.}, 3(1):219--240, 2018.

\bibitem{Tao_2001}
Terence Tao.
\newblock Multilinear weighted convolution of {{\(L^2\)}} functions, and
  applications to nonlinear dispersive equations.
\newblock {\em Am. J. Math.}, 123(5):839--908, 2001.

\bibitem{Tucsnak_book}
Marius Tucsnak and George Weiss.
\newblock {\em Observation and control for operator semigroups}.
\newblock Birkh\"{a}user Advanced Texts: Basler Lehrb\"{u}cher. [Birkh\"{a}user
  Advanced Texts: Basel Textbooks]. Birkh\"{a}user Verlag, Basel, 2009.

\bibitem{Wazwaz_2011}
ABDUL-MAJID WAZWAZ.
\newblock Soliton solutions for seventh-order kawahara equation with
  time-dependent coefficients.
\newblock {\em Modern Physics Letters B}, 25(09):643--648, 2011.

\bibitem{Zhang_Zhao_2012}
Bing-Yu Zhang and Xiangqing Zhao.
\newblock Control and stabilization of the {K}awahara equation on a periodic
  domain.
\newblock {\em Commun. Inf. Syst.}, 12(1):77--95, 2012.

\bibitem{Zhao_Meng_2018}
Xiangqing Zhao and Meng Bai.
\newblock Control and stabilization of high-order {K}d{V} equation posed on the
  periodic domain.
\newblock {\em J. Partial Differ. Equ.}, 31(1):29--46, 2018.

\bibitem{Zhang_Zhao_2015}
Xiangqing Zhao and Bing-Yu Zhang.
\newblock Global controllability and stabilizability of {K}awahara equation on
  a periodic domain.
\newblock {\em Math. Control Relat. Fields}, 5(2):335--358, 2015.

\bibitem{Goktas_Hereman_1997}
Ünal Göktaş and Willy Hereman.
\newblock Symbolic computation of conserved densities for systems of nonlinear
  evolution equations.
\newblock {\em Journal of Symbolic Computation}, 24(5):591--622, 1997.

\end{thebibliography}
\end{document}